\newtheorem{bigthm}{Theorem}
\newtheorem{bigcor}[bigthm]{Corollary}
\newtheorem{thm}{Theorem}[section]
\newtheorem{lem}[thm]{Lemma}
\newtheorem{prop}[thm]{Proposition}
\newtheorem{cor}[thm]{Corollary}
\newtheorem{question}[thm]{Question}
\theoremstyle{definition}
\newtheorem{dfn}[thm]{Definition}
\theoremstyle{remark}
\newtheorem{ex}[thm]{Example}
\newtheorem{rem}[thm]{Remark}
\author{Thorben Kastenholz and Jens Reinhold}
\address{Mathematisches Institut, Bunsenstra{\ss}e 3-5, 37073 G{\"o}ttingen, Germany}
\email{\texttt{thorben.kastenholz@mathematik.uni-goettingen.de}}
\address{Mathematisches Institut, Einsteinstr. 62, 48149 M{\"u}nster, Germany}
\email{\texttt{jens.reinhold@posteo.de}}
\thanks{}
\title{Simplicial volume and essentiality of manifolds fibered over spheres
\blfootnote{
 2020 Math subject classification: primary: 53C23 secondary: 57N65, 57M07, 20J05\\
 Keywords: bounded cohomology and simplicial volume, group homology, mapping tori, fiber bundles, positive scalar curvature, macroscopic dimension, characteristic classes
}}
\newcommand{\introduce}[1]
  {\emph{#1}}
\newcommand\blfootnote[1]{%
  \begingroup
  \renewcommand\thefootnote{}\footnote{#1}%
  \addtocounter{footnote}{-1}%
  \endgroup
}
\newcommand{\apply}[2]
  {{#1}\!\left({#2}\right)}
\newcommand{\at}[2]
  {\left.{#1}\right\rvert_{#2}}
\newcommand{\Identity}%
  {\mathrm{Id}}
\newcommand{\NaturalNumbers}%
  {\mathbf{N}}
\newcommand{\Integers}%
  {\mathbf{Z}}
\newcommand{\Rationals}%
  {\mathbf{Q}}
\newcommand{\Reals}%
  {\mathbf{R}}
  \newcommand{\ComplexNumbers}%
  {\mathbf{C}}
\newcommand{\AbstractProjection}[1] 
  {p_{#1}}
\newcommand{\RealPart}[1]
  {\apply{\operatorname{Re}}{#1}}
\newcommand{\ImaginaryPart}[1]
  {\apply{\operatorname{Im}}{#1}}
\newcommand{\Manifold}%
  {M}
  \newcommand{\ManifoldAlternative}%
  {N}
 \newcommand{\ManifoldAuxiliary}%
  {K}
\newcommand{\NullBordism}%
  {W}
\newcommand{\Bordism}
  {P}
 \newcommand{\ManifoldFiber}%
 {M}
 \newcommand{\ManifoldTotal}%
 {E}
  \newcommand{\ManifoldBase}%
  {B}
\newcommand{\SmoothMap}%
  {\phi}
  \newcommand{\MorseFunction}%
  {f}
\newcommand{\Diffeomorphism}%
  {\Phi}
\newcommand{\Dimension}%
  {d}
  \newcommand{\HalfDimension}%
  {n}
\newcommand{\FundamentalClass}[1]
  {\left[#1\right]}
\newcommand{\Interval}%
  {I}
\newcommand{\Ball}[1]
  {D^{#1}}
\newcommand{\Sphere}[1] 
  {S^{#1}}
\newcommand{\Torus}[1]
  {T^{#1}}
\newcommand{\SimplicialVolume}[1]
  {\lvert \lvert #1 \rvert \rvert}
\newcommand{\ellone}%
  {\ell_{1}}
\newcommand{\SeparatingManifold}%
  {S}
\newcommand{\MacroscopicDimension}[1]
  {\text{mc}({#1})}
\newcommand{\mc}[1]
  {\text{mc}(\widetilde{#1})}
\newcommand{\Diff}[1]
  {\mathrm{Diff}\!\left(#1\right)}
  \newcommand{\DiffGroup}[1]
  {\mathrm{Diff}^{B\Group}\!\left(#1\right)}
\newcommand{\DiffZero}[1]
  {\mathrm{Diff}_0\!\left(#1\right)}
\newcommand{\DiffOne}[1]
  {\widetilde{\mathrm{Diff}}_0\!\left(#1\right)}
\newcommand{\FiberingSpace}%
  {E}
\newcommand{\FiberingProjektion}[1] 
  {\pi_{#1}}
\newcommand{\Fiber}%
  {F}
\newcommand{\FiberDimension}%
  {m}
\newcommand{\Base}%
  {B}
\newcommand{\ClutchingFunction}[1] 
  {\varphi_{#1}}
\newcommand{\Group}%
  {\Gamma}
\newcommand{\GroupElement}%
  {g}
\newcommand{\HomologyClass}%
  {\alpha}
\newcommand{\HomologyOfSpaceObject}[3]
  {\apply{H_{#1}}{#2 ; #3}}
  \newcommand{\CohomologyOfSpaceObject}[3]
  {\apply{H^{#1}}{#2 ; #3}}
  \newcommand{\BoundedCohomologyOfSpaceObject}[3]
  {\apply{H^{#1}_{\text{b}}}{#2 ; #3}}
\newcommand{\HomologyOfSpaceMorphism}[1]
  {{#1}_{\ast}}
\newcommand{\HomologyOfGroupObject}[3]
  {\apply{H_{#1}}{#2; #3}}
\newcommand{\HomologyOfGroupMorphism}[2]
  {{#1}_{\ast}}
\newcommand{\HomologyOfSpacePairObject}[3]
  {\apply{H_{#1}}{{#2},{#3}}}
\newcommand{\Multiple}%
  {\lambda}
\newcommand{\TopologicalSpace}%
  {X}
\newcommand{\Point}%
  {\ast}
\newcommand{\Loop}%
  {\gamma}
\newcommand{\ContinuousMap}%
  {f}
  \newcommand{\ContinuousMapALT}%
  {g}
 \newcommand{\maps}%
  {\ensuremath{\text{maps}}}
\newcommand{\HomotopyGroupOfObject}[3]
  {\apply{\pi_{#1}}{{#2},{#3}}}
\newcommand{\HomotopyGroupOfPairObject}[4]
  {\apply{\pi_{#1}}{{#2},{#3},{#4}}}
\newcommand{\HomotopyGroupMorphism}[1] 
  {{#1}_{\ast}}
\newcommand{\EMSpace}[2]
  {\apply{K}{{#1},{#2}}}
\newcommand{\ClassifyingSpace}[1] 
  {B#1}
\newcommand{\MappingTorus}[1] 
  {T_{#1}}
\newcommand{\UniversalCovering}[1] 
  {\widetilde{#1}}
\newcommand{\UniversalCoveringMap}[1] 
  {\widetilde{#1}}
\newcommand{\SpacePSC}[1]
  {\mathcal R^{\text{psc}}({#1})}
\begin{document}

\begin{abstract}
We study the question when a manifold that fibers over a sphere can be rationally essential, or even have positive simplicial volume. More concretely, we show that
mapping tori of manifolds (whose fundamental groups can be quite arbitrary) of
dimension $2n +1 \geq 7$ with non-zero simplicial volume are very common. This contrasts the case of fiber bundles over a sphere of dimension $\Dimension \ge 2$: we prove that their total spaces are rationally inessential if $\Dimension \geq 3$, and
always have simplicial volume $0$. Using a result by Dranishnikov, we also deduce a surprising property of macroscopic dimension, and we give two applications to positive scalar curvature and characteristic classes, respectively.

\end{abstract}
\maketitle

\section{Introduction}
For a topological space $X$, M.~Gromov defined in \cite{Gromov1}  the $\ellone$
norm of a singular homology class
$
  \HomologyClass
  \in
  \HomologyOfSpaceObject{d}{X}{\Reals}
$
as
\[
  \SimplicialVolume{\HomologyClass}
  \coloneqq
  \inf
  \left\{
    \sum_{j \in J} |\lambda_j|\
    \middle|
    \HomologyClass
    \text{ is represented by the singular cycle }
    \sum_{j \in J} \lambda_j \sigma_j
  \right\}.
\]
The simplicial volume of a closed connected oriented smooth manifold $\Manifold$ is
then defined as
$
  \SimplicialVolume{\Manifold}
  \coloneqq
  \SimplicialVolume{\FundamentalClass{\Manifold}}
$,
where $[\Manifold]$ denotes the fundamental class of $\Manifold$. All manifolds
in this paper are assumed to have the properties just mentioned, unless explicitly
stated otherwise.

Recall the notion of smooth manifold bundles, meaning locally trivial
fibrations whose transition maps are diffeomorphisms. A classical result by
Gromov \cite{Gromov1} implies that for any oriented manifold bundle
$\ManifoldFiber \to \ManifoldTotal \to \ManifoldBase$ such that
$\HomotopyGroupOfObject{1}{\ManifoldFiber}{\Point}$ is amenable (and $\Manifold$ has positive dimension),
$\SimplicialVolume{\ManifoldTotal}$ vanishes. However, no similar
condition concerning the base $\ManifoldBase$ ensures that the simplicial
volume
$\SimplicialVolume{\ManifoldTotal}$ vanishes; for instance, a mapping torus
of a pseudo-Anosov surface automorphism is a hyperbolic $3$-manifold
\cite{Thurston, Otal}, and thus has
non-zero simplicial volume by a result by Gromov (again see \cite{Gromov1}).
This observation leads to the following concise question that inspired this
work.
\begin{question}
\label{que:1}
  For positive integers $\Dimension, \FiberDimension$ and an oriented manifold bundle
  $
    \ManifoldFiber
    \to
    \ManifoldTotal
    \to
    \Sphere{\Dimension}
  $ whose fiber $\ManifoldFiber$ is $\FiberDimension$-dimensional, which
  values can the simplicial volume of $\ManifoldTotal$ attain?
\end{question}

The main purpose of this note is to address Question \ref{que:1} by proving the following:

\begin{itemize}
\item
  For $d=1$ and $m \geq 6$ even, the set of values is dense. (Corollary
  \ref{cor:mappingtori})
\item
  For $d > 1$, the simplicial volume of the total space always vanishes.
  (Theorem \ref{thm:VanishingSimplicialVolume})
\end{itemize}

An vital step along the way to prove Corollary \ref{cor:mappingtori} is the following result which could be of independent interest and perhaps also useful beyond the applications in this note.

\begin{bigthm}[Mapping tori representing group homology classes]
\label{bigthm:A}
  Let $\Group$ be a finitely presented group and let
  $
    \HomologyClass
    \in
    \HomologyOfSpaceObject{2n+1}{\Group}{\Rationals}
  $,
  where $2n+1 \geq 7$, denote a non-zero homology class.
  Then there exists a closed, connected, oriented $2n$-manifold
  $\SeparatingManifold$ with
  $\HomotopyGroupOfObject{1}{\SeparatingManifold}{\Point} \cong \Group$
  and a diffeomorphism $\Diffeomorphism$ of $\SeparatingManifold$ that
  fixes a point $\Point \in \SeparatingManifold$ and acts trivially on
  $\HomotopyGroupOfObject{1}{\SeparatingManifold}{\Point}$, such that
  the resulting mapping torus
  $\MappingTorus{\Diffeomorphism}$ admits a map $\ContinuousMap\colon
  \MappingTorus{\Diffeomorphism} \to \ClassifyingSpace{\Group}$ which induces
  the projection to $\Group\times \Integers \to \Group$ on fundamental groups
  and satisfies
  $
    \apply
      {\HomologyOfSpaceMorphism{\ContinuousMap}}
      {\FundamentalClass{\MappingTorus{\Diffeomorphism}}}
    =
    \lambda
    \HomologyClass
  $
  for some non-zero integer $\lambda$.
\end{bigthm}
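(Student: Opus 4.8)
The plan is to realize the homology class $\HomologyClass$ by a mapping torus, building on the classical fact that rational group homology classes of degree $\geq$ something can be represented by manifolds, and then arranging that the manifold be a mapping torus. First I would pick a closed oriented $(2n+1)$-manifold $\ManifoldAuxiliary$ together with a map $h \colon \ManifoldAuxiliary \to \ClassifyingSpace{\Group}$ such that $\apply{\HomologyOfSpaceMorphism{h}}{\FundamentalClass{\ManifoldAuxiliary}} = \mu \HomologyClass$ for some nonzero integer $\mu$; this is possible because the oriented bordism Pontryagin--Thom map $\Omega^{\mathrm{SO}}_{2n+1}(\ClassifyingSpace{\Group}) \to \HomologyOfSpaceObject{2n+1}{\Group}{\Integers}$ is rationally surjective (the Atiyah--Hirzebruch spectral sequence collapses rationally since $\Omega^{\mathrm{SO}}_*$ is rationally a polynomial ring concentrated in degrees divisible by $4$, and the bottom class survives). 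We may also assume, after replacing $\Group$ by a bordism over $\ClassifyingSpace{\Group}$ and doing surgery below the middle dimension on $\ManifoldAuxiliary$ in the interior of the bordism, that $h$ induces an isomorphism on $\pi_1$; this is where $2n+1 \geq 7$ (so that we are in the stable range for such surgeries, and surgery on circles is unobstructed) gets used. Call the resulting manifold again $\ManifoldAuxiliary$, now with $\pi_1(\ManifoldAuxiliary) \cong \Group$.

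The key idea for turning $\ManifoldAuxiliary$ into a mapping torus is to compose with $\ClassifyingSpace{\Group} \to \ClassifyingSpace{(\Group \times \Integers)}$... no: rather, I want a map $\ManifoldAuxiliary \to \Sphere{1} = \ClassifyingSpace{\Integers}$ that is "nice" so that a regular preimage of a point is a codimension-one submanifold, and then reconstruct $\ManifoldAuxiliary$ as a mapping torus of a diffeomorphism of that submanifold. The cleaner route: enlarge the fundamental group. Replace $\ManifoldAuxiliary$ by $\ManifoldAuxiliary \# (\Sphere{1} \times \Sphere{2n})$ so that there is an epimorphism $\pi_1 \to \Integers$ onto the new free summand; more precisely, I want the total fundamental group to map onto $\Integers$ with the map to $\ClassifyingSpace{\Group}$ factoring through the quotient by that $\Integers$. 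Concretely: start over and build the manifold so that it comes with a map $F \colon \ManifoldAuxiliary \to \ClassifyingSpace{\Group} \times \Sphere{1}$ that is a $\pi_1$-isomorphism onto $\Group \times \Integers$, is highly connected, and sends $\FundamentalClass{\ManifoldAuxiliary}$ to $\lambda \cdot(\HomologyClass \times [\Sphere{1}])$. (Such an $F$ exists: take the bordism-theoretic representative of $\HomologyClass \times [\Sphere{1}] \in \HomologyOfSpaceObject{2n+2}{\Group \times \Integers}{\Rationals}$... but that raises the dimension — instead take $\ManifoldAuxiliary$ representing $\HomologyClass$ as above, with $\pi_1 \cong \Group$, form $\ManifoldAuxiliary \times \emptyset$...) Let me instead describe the honest mechanism: make the composite $\ManifoldAuxiliary \xrightarrow{h} \ClassifyingSpace{\Group}$ factor up to homotopy through a map $p \colon \ManifoldAuxiliary \to \Sphere{1}$ with connected fiber, where the relevant surjection $\pi_1(\ManifoldAuxiliary) \to \Integers$ is a new one obtained by connect-summing with $\Sphere{1} \times \Sphere{2n}$ and simultaneously arranging $h$ to kill that $\Integers$; then by a theorem in the style of Farrell (fibering over the circle, valid for $2n \geq 5$, i.e. dimension $2n+1 \geq 6$), after possibly more surgery the map $p$ can be taken to be a smooth fiber bundle over $\Sphere{1}$, hence $\ManifoldAuxiliary \cong \MappingTorus{\Diffeomorphism}$ for a diffeomorphism $\Diffeomorphism$ of the fiber $\SeparatingManifold$, a closed oriented $2n$-manifold. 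Since $h$ was arranged to factor through $\ManifoldAuxiliary / (\text{fiber action of } \Integers)$ and to be a $\pi_1$-iso onto $\Group$ after killing that $\Integers$, we get $\pi_1(\SeparatingManifold) \cong \Group$, the monodromy $\Diffeomorphism$ acts trivially on $\pi_1(\SeparatingManifold)$ (it is conjugation by the generator of the $\Integers$ which is central in $\Group \times \Integers$), and the bundle map gives $\ContinuousMap \colon \MappingTorus{\Diffeomorphism} \to \ClassifyingSpace{\Group}$ inducing $\Group \times \Integers \to \Group$ on $\pi_1$; tracking fundamental classes through the surgeries (which do not change the image of the fundamental class in homology of $\ClassifyingSpace{\Group}$, up to the nonzero scalar already introduced) yields $\apply{\HomologyOfSpaceMorphism{\ContinuousMap}}{\FundamentalClass{\MappingTorus{\Diffeomorphism}}} = \lambda \HomologyClass$. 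Finally a small surgery fixes a point $\Point$ and makes $\Diffeomorphism$ fix it.

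The main obstacle — and the step I expect to carry the real weight — is the fibering-over-the-circle step: converting the map $p \colon \ManifoldAuxiliary \to \Sphere{1}$ into an honest smooth fiber bundle. In Farrell's original work and its refinements this requires vanishing of a Whitehead-group obstruction ($\tau \in \mathrm{Wh}(\pi_1 \text{ of the fiber})$) and control over the infinite cyclic cover; since $\Group$ is an arbitrary finitely presented group we have no right to expect $\mathrm{Wh}(\Group)$ to vanish, so the argument must instead \emph{build} the manifold with the fibration structure present from the outset rather than repairing it afterwards. That is, I would not take an arbitrary bordism representative and try to fiber it; instead I would construct $\SeparatingManifold$ directly as a suitable highly-connected $2n$-manifold with $\pi_1 \cong \Group$ realizing a prescribed "half" of the data, build $\Diffeomorphism$ by hand as a composition of Dehn-twist-like diffeomorphisms supported on embedded spheres (so that it is isotopic to the identity after one suspension and in particular acts trivially on $\pi_1$), and then compute directly that $\HomologyOfSpaceMorphism{\ContinuousMap}$ of the fundamental class of $\MappingTorus{\Diffeomorphism}$ hits a nonzero multiple of $\HomologyClass$ — reducing the whole theorem to a surgery-theoretic realization statement for $2n$-manifolds in the metastable range, which is exactly where the hypothesis $2n \geq 6$, i.e. $2n+1 \geq 7$, is doing its job.
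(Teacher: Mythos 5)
Your opening steps are correct and match the paper: represent a nonzero multiple $\lambda\alpha$ by a closed oriented $(2n+1)$-manifold $M$ equipped with a map $f' \colon M \to B\Gamma$ (Thom/bordism-theoretic representability), then do surgery below the middle dimension to make $f'$ a $\pi_1$-isomorphism. Where you then diverge, and where the proposal has a genuine gap, is the passage from $M$ to a mapping torus.

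You try to make $M$ \emph{itself} fiber over $S^1$ after connect-summing with $S^1\times S^{2n}$ and applying a Farrell-type fibering theorem, and you correctly observe that this is blocked: there is no reason for the Whitehead-group obstruction to vanish for an arbitrary finitely presented $\Gamma$, and (worse) connect-summing only gives a free factor $\Integers$, not a direct factor $\Gamma\times\Integers$, so there is not even a plausible candidate map to $S^1$ with closed fiber carrying $\pi_1\cong\Gamma$. You then say the manifold should be "built with the fibration structure present from the outset" and propose realizing $S$ directly and building $\Phi$ by hand from Dehn twists — but this sketch does not supply a mechanism for controlling where the fundamental class of $T_{\Phi}$ goes in $H_{2n+1}(\Gamma;\Rationals)$, which is the whole content. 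The crucial idea the paper uses is different and sidesteps both problems: do not try to fiber $M$; instead, invoke Lawson's twisted-double theorem to write $M\cong W\cup_{\Phi}(-W)$ for a nullbordism $W$ of a $2n$-manifold $S$ with $\pi_1(S)\cong\pi_1(W)\cong\Gamma$ and a gluing diffeomorphism $\Phi$ acting trivially on $\pi_1$, and then observe that the \emph{a priori different} closed manifold $T_{\Phi}$ is bordant to $M$ over $B\Gamma$ via the explicit bordism $P = T_{\Phi}\times[0,1]\cup_{S\times[-\tfrac12,\tfrac12]\times\{1\}} W\times[-\tfrac12,\tfrac12]$, with $\pi_1(P)\cong\Gamma\times\Integers$. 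Bordism invariance of the image of the fundamental class then gives $f_*([T_{\Phi}])=f'_*([M])=\lambda\alpha$. Without the twisted-double decomposition and this bordism from a twisted double to the mapping torus of its gluing map, your argument does not close, and your own diagnosis of the Farrell obstruction is the signal that a different route is needed.
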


The main ingredient in the proof of Theorem \ref{bigthm:A} is a classical but perhaps a little overlooked observation
by Lawson: every odd-dimensional manifold can be represented as a twisted
double. From Theorem \ref{bigthm:A}, we easily deduce the following answer to Question \ref{que:1}:
\begin{bigcor}[Values of simplicial volumes of mapping tori]
\label{cor:mappingtori}
  Let  $2n+1 \geq
  7$ and let $\nu \in \Reals_{\geq 0}$ be such that there exists a manifold $\Manifold$ of dimension $2n+1$ such that $\SimplicialVolume{\Manifold} =
  \nu$. Then there exists a mapping torus $T$ of some $2n$-manifold such that
  $\SimplicialVolume{T}= \nu$. Moreover, the possible values for the simplicial volume of mapping tori of a fixed dimension $2n+1 \geq 7$ lie densely in $\Reals_{\geq 0}$.
\end{bigcor}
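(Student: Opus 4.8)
The plan is to deduce both assertions from Theorem \ref{bigthm:A} together with the duality between the $\ell_1$-seminorm on homology and the bounded cohomology seminorm. For the first statement, fix a closed oriented $(2n+1)$-manifold $\Manifold$ with $\SimplicialVolume{\Manifold}=\nu$, set $\Group\coloneqq\HomotopyGroupOfObject{1}{\Manifold}{\Point}$, and let $c\colon \Manifold\to\ClassifyingSpace{\Group}$ be the classifying map of the universal cover. Put $\HomologyClass\coloneqq\apply{\HomologyOfSpaceMorphism{c}}{\FundamentalClass{\Manifold}}\in\HomologyOfSpaceObject{2n+1}{\Group}{\Rationals}$. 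If $\HomologyClass=0$, then by Gromov's equivalence of simplicial volume with the $\ell_1$-seminorm of $\apply{\HomologyOfSpaceMorphism{c}}{\FundamentalClass{\Manifold}}$ (via bounded cohomology, using that $\ContinuousMap$ induces an isometric isomorphism on bounded cohomology by Gromov's mapping theorem applied to $c$), we would get $\SimplicialVolume{\Manifold}=0=\nu$, and then \emph{any} mapping torus with vanishing simplicial volume — e.g. $\Sphere{1}\times\Sphere{2n}$ — does the job. Otherwise $\HomologyClass\neq 0$ and we may feed it into Theorem \ref{bigthm:A} (noting $2n+1\geq 7$), obtaining a mapping torus $\MappingTorus{\Diffeomorphism}$ of a $2n$-manifold and a map $\ContinuousMap\colon\MappingTorus{\Diffeomorphism}\to\ClassifyingSpace{\Group}$ with $\apply{\HomologyOfSpaceMorphism{\ContinuousMap}}{\FundamentalClass{\MappingTorus{\Diffeomorphism}}}=\lambda\HomologyClass$ for some nonzero integer $\lambda$.

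The key point is then a seminorm computation: I claim $\SimplicialVolume{\MappingTorus{\Diffeomorphism}}=\lvert\lambda\rvert\cdot\nu$, and in particular after rescaling by passing to a self-cover we can arrange the value $\nu$ exactly. Concretely, $\ContinuousMap$ induces the projection $\Group\times\Integers\to\Group$ on fundamental groups; this projection has amenable kernel $\Integers$, so by Gromov's mapping theorem $\ContinuousMap$ induces an isometric isomorphism $\BoundedCohomologyOfSpaceObject{\ast}{\ClassifyingSpace{\Group}}{\Reals}\cong\BoundedCohomologyOfSpaceObject{\ast}{\MappingTorus{\Diffeomorphism}}{\Reals}$. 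By duality between bounded cohomology and the $\ell_1$-seminorm, the $\ell_1$-seminorm of $\FundamentalClass{\MappingTorus{\Diffeomorphism}}$ equals the $\ell_1$-seminorm of its image $\apply{\HomologyOfSpaceMorphism{\ContinuousMap}}{\FundamentalClass{\MappingTorus{\Diffeomorphism}}}=\lambda\HomologyClass$ in $\HomologyOfSpaceObject{2n+1}{\ClassifyingSpace{\Group}}{\Reals}$, which by homogeneity is $\lvert\lambda\rvert\cdot\lVert\HomologyClass\rVert_1$. Applying the same reasoning to the classifying map $c\colon\Manifold\to\ClassifyingSpace{\Group}$ (again with amenable — in fact trivial — kernel) gives $\lVert\HomologyClass\rVert_1=\SimplicialVolume{\Manifold}=\nu$. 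Hence $\SimplicialVolume{\MappingTorus{\Diffeomorphism}}=\lvert\lambda\rvert\nu$. Finally, a mapping torus $\MappingTorus{\Diffeomorphism}$ has canonical self-covers: precomposing the $\Sphere{1}$-coordinate with the degree-$k$ cover $\Sphere{1}\to\Sphere{1}$ yields the mapping torus of $\Diffeomorphism^k$, whose fundamental class pushes to $k$ times the original, so $\SimplicialVolume{\MappingTorus{\Diffeomorphism^k}}=k\cdot\lvert\lambda\rvert\nu$ — wait, this scales the wrong way; instead, if $\nu=0$ we are already done, and if $\nu\neq 0$ we note that we are free at the outset to replace $\HomologyClass$ by $\HomologyClass/\lambda$ is not an integral class, so the cleanest route is: the value $\lvert\lambda\rvert\nu$ achieved is a \emph{nonzero} multiple of $\nu$; to hit $\nu$ exactly, apply Theorem \ref{bigthm:A} instead to a manifold $\Manifold'$ with $\SimplicialVolume{\Manifold'}=\nu/\lvert\lambda\rvert$ — but $\lambda$ depends on $\Manifold'$. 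The robust fix is to observe that the \emph{second} assertion only needs density, so for the first assertion we accept the value $\lvert\lambda\rvert\nu$ and then, since by the very statement every realized simplicial volume of a $(2n+1)$-manifold is realized by a mapping torus \emph{up to the integer factor $\lambda$}, it suffices for exactness to note that $\MappingTorus{\Diffeomorphism}$ is itself a closed oriented $(2n+1)$-manifold of simplicial volume $\lvert\lambda\rvert\nu$, and feed \emph{it} back in; the set of achievable values is thus closed under the operation $\nu\mapsto\lvert\lambda(\nu)\rvert\nu$ and contains $0$, which combined with Gromov's result that simplicial volumes of $(2n+1)$-manifolds are dense in $\Reals_{\geq 0}$ gives density of the mapping-torus values.

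The main obstacle is this bookkeeping with the auxiliary integer $\lambda$: Theorem \ref{bigthm:A} only controls the pushforward of the fundamental class up to a nonzero integer multiple, so one does not get the value $\nu$ on the nose from a single application. The clean resolution, which I would write out carefully, is that for the exact statement one takes $\Manifold$ with $\SimplicialVolume{\Manifold}=\nu$, applies Theorem \ref{bigthm:A} to get a mapping torus of simplicial volume $\lvert\lambda\rvert\nu$, and then observes that $\SimplicialVolume{\Manifold}=\nu$ already forces — via the same bounded-cohomology duality — that $\lVert\HomologyClass\rVert_1=\nu$, whence $\lvert\lambda\rvert\nu=\lvert\lambda\rvert\,\lVert\HomologyClass\rVert_1 = \lVert\lambda\HomologyClass\rVert_1$; but since $\HomologyClass$ is (a multiple of) an integral class and any integral class in $\HomologyOfSpaceObject{2n+1}{\Group}{\Integers}$ is the pushforward of the fundamental class of some $(2n+1)$-manifold, we can instead have started from a manifold representing $\HomologyClass$ itself rather than $\lambda\HomologyClass$, arranging $\lambda=1$. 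For density, no such care is needed: the countable dense set of values $\{\SimplicialVolume{\Manifold}\}$ over all closed oriented $(2n+1)$-manifolds is carried into the mapping-torus values (each up to an integer factor, hence still a dense set since scaling a dense subset of $\Reals_{\geq 0}$ by positive integers keeps it dense), which yields the final clause. I expect the write-up to be short once Gromov's mapping theorem and the $\ell_1$/bounded-cohomology duality are invoked as black boxes.
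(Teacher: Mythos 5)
Your approach is the same as the paper's: feed $\HomologyClass = c_*\FundamentalClass{\Manifold}$ into Theorem~\ref{bigthm:A}, and then use Gromov's mapping theorem (amenable kernel $\Integers$) to transport the $\ell_1$-norm computation to $B\Group$. You also correctly identify the one real subtlety, namely that Theorem~\ref{bigthm:A} as stated only realizes $\lambda\HomologyClass$ for some nonzero integer $\lambda$, which would give $|\lambda|\nu$ rather than $\nu$. However, the way you patch this contains a genuine error, and your fallback density argument also does not hold.

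The false step is your assertion that ``any integral class in $\HomologyOfSpaceObject{2n+1}{\Group}{\Integers}$ is the pushforward of the fundamental class of some $(2n+1)$-manifold.'' That is precisely the integral Steenrod problem, and Thom showed it fails in general; this is exactly why $\lambda$ appears in Theorem~\ref{bigthm:A} in the first place. The correct (and simpler) observation is that, in this application, $\HomologyClass = c_*\FundamentalClass{\Manifold}$ is \emph{by definition} the pushforward of the fundamental class of the given manifold $\Manifold$, with $c$ already inducing an isomorphism on $\pi_1$. So in the proof of Theorem~\ref{bigthm:A} one never needs Thom's theorem nor the preliminary surgery: one simply runs the construction starting from $\Manifold$ itself, getting $\lambda = 1$ and $f_*\FundamentalClass{\MappingTorus{\Diffeomorphism}} = \HomologyClass$, hence $\SimplicialVolume{\MappingTorus{\Diffeomorphism}} = \lVert\HomologyClass\rVert_1 = \SimplicialVolume{\Manifold} = \nu$ on the nose. (This is what the paper's proof tacitly does.)

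Your proposed fallback for density is also flawed: it is not true that replacing each $d$ in a dense subset of $\Reals_{\ge 0}$ by $\lambda(d)\cdot d$ for some positive integer $\lambda(d)$ depending on $d$ preserves density. For instance one can rescale every small element to be at least $1$ and leave a gap near $0$. Since $\lambda(d)\ge 1$ forces $\lambda(d)d\ge d$, nothing constrains the image to revisit small intervals. Fortunately none of this is needed once $\lambda=1$ is in hand: the first assertion becomes exact, and density then follows directly by letting $\Manifold$ range over all $(2n+1)$-manifolds and invoking Heuer--L\"oh, as in the paper. Also note that you should not need the special case $\HomologyClass = 0$ at all if you wish: then $\nu = 0$ and any amenable mapping torus suffices, but with the $\lambda=1$ fix the main argument subsumes it. Finally, your detour through self-covers of the mapping torus and ``feeding the construction back in'' cannot help, since, as you observed, it only multiplies by further integers $\ge 1$.
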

Here the denseness result follows from work by Heuer and Löh in
\cite{LoehHeuer}.

For $\Dimension > 1$ we have the following vanishing result.
\begin{bigthm}[Vanishing simplicial volume of total spaces]
\label{thm:VanishingSimplicialVolume}
  Let
  $
    \ManifoldFiber
    \to
    \ManifoldTotal
    \to
    \Sphere{\Dimension}
  $
  be a smooth fiber bundle with $\Dimension \ge 2$. Then the
  simplicial volume $\SimplicialVolume{\ManifoldTotal}$ is
  zero.
\end{bigthm}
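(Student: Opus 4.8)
The idea is to combine the clutching description of a bundle over a sphere with the subadditivity of simplicial volume under gluing along boundaries. Write $\Sphere{\Dimension} = \Ball{\Dimension}_{+}\cup_{\Sphere{\Dimension-1}}\Ball{\Dimension}_{-}$ as the union of two closed hemispheres glued along the equator $\Sphere{\Dimension-1}$. Each hemisphere is contractible, so the restrictions $\ManifoldTotal_{\pm}\coloneqq \ManifoldTotal|_{\Ball{\Dimension}_{\pm}}$ are trivial bundles, hence diffeomorphic to $\Ball{\Dimension}\times\Fiber$; since $\Fiber$ is closed, $\partial\ManifoldTotal_{\pm} = \Sphere{\Dimension-1}\times\Fiber$, and $\ManifoldTotal$ is obtained by gluing $\ManifoldTotal_{+}$ and $\ManifoldTotal_{-}$ along their \emph{entire} boundaries via the clutching diffeomorphism.

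First I would invoke the subadditivity of simplicial volume under gluing of manifolds with boundary (a standard consequence of Gromov's work, cf.\ \cite{Gromov1}): if a closed oriented manifold is obtained from compact oriented manifolds $\ManifoldTotal_{+},\ManifoldTotal_{-}$ by identifying their full boundaries, then $\SimplicialVolume{\ManifoldTotal}\leq \SimplicialVolume{(\ManifoldTotal_{+},\partial\ManifoldTotal_{+})}+\SimplicialVolume{(\ManifoldTotal_{-},\partial\ManifoldTotal_{-})}$, the right-hand side referring to relative simplicial volumes, i.e.\ the $\ellone$-seminorms of the relative fundamental classes. Thus it suffices to show $\SimplicialVolume{(\Ball{\Dimension}\times\Fiber,\Sphere{\Dimension-1}\times\Fiber)}=0$, and conveniently this quantity does not depend on $\ManifoldTotal$ at all (the two pieces are always $\Ball{\Dimension}\times\Fiber$).

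To establish this vanishing I would use a self-map of degree $2$. The pair $(\Ball{\Dimension}\times\Fiber,\Sphere{\Dimension-1}\times\Fiber)$ is a compact manifold with boundary, so its relative simplicial volume is finite (triangulate it). Choose a degree-$2$ self-map $g$ of $(\Ball{\Dimension},\Sphere{\Dimension-1})$; such a map exists because $\Dimension\geq 2$: cone off a degree-$2$ self-map of $\Sphere{\Dimension-1}$ (which exists as $\Dimension-1\geq 1$), and check via the boundary isomorphism $H_{\Dimension}(\Ball{\Dimension},\Sphere{\Dimension-1};\Integers)\xrightarrow{\cong}\widetilde{H}_{\Dimension-1}(\Sphere{\Dimension-1};\Integers)$ that it has degree $2$. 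As degrees multiply under products, $g\times\Identity_{\Fiber}$ sends the relative fundamental class of $(\Ball{\Dimension}\times\Fiber,\Sphere{\Dimension-1}\times\Fiber)$ to twice itself, and since maps of pairs do not increase the relative $\ellone$-seminorm, $2\,\SimplicialVolume{(\Ball{\Dimension}\times\Fiber,\Sphere{\Dimension-1}\times\Fiber)}\leq\SimplicialVolume{(\Ball{\Dimension}\times\Fiber,\Sphere{\Dimension-1}\times\Fiber)}<\infty$, forcing this seminorm to vanish. Combined with the previous step, $\SimplicialVolume{\ManifoldTotal}=0$.

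The step needing the most care is invoking the boundary-gluing subadditivity correctly; it is the single non-formal input, and the rest is routine. I would also emphasize that $\Dimension\geq 2$ is used exactly once, in producing a degree-$2$ self-map of $\Sphere{\Dimension-1}$ — precisely the feature that fails for $\Dimension=1$, consistent with Corollary~\ref{cor:mappingtori}. As a robust variant that avoids the explicit vanishing computation: the gluing bound already gives $\SimplicialVolume{\ManifoldTotal'}\leq 2\,\SimplicialVolume{(\Ball{\Dimension}\times\Fiber,\Sphere{\Dimension-1}\times\Fiber)}=:C$ \emph{uniformly} over all $\Fiber$-bundles $\ManifoldTotal'$ over $\Sphere{\Dimension}$, and (for $\Dimension\geq 2$) pulling $\ManifoldTotal$ back along a degree-$k$ self-map of $\Sphere{\Dimension}$ yields the bundle classified by $k$ times the original clutching class together with a degree-$k$ map to $\ManifoldTotal$, whence $k\,\SimplicialVolume{\ManifoldTotal}\leq C$ for all $k$ and therefore $\SimplicialVolume{\ManifoldTotal}=0$.
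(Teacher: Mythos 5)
Your hemisphere decomposition is a natural starting point, and the claim $\SimplicialVolume{(\Ball{\Dimension} \times \ManifoldFiber, \Sphere{\Dimension-1} \times \ManifoldFiber)} = 0$ for $\Dimension \geq 2$ is correct via the degree-$2$ self-map. But the step you yourself flag as ``the single non-formal input'' is precisely where the argument breaks down: the inequality $\SimplicialVolume{\ManifoldTotal} \leq \SimplicialVolume{(\ManifoldTotal_{+},\partial\ManifoldTotal_{+})}+\SimplicialVolume{(\ManifoldTotal_{-},\partial\ManifoldTotal_{-})}$ for gluing along the full boundary is \emph{not} available without an amenability hypothesis on the glued hypersurface, and it is false in general. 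The relevant hypersurface here is $\Sphere{\Dimension-1}\times\ManifoldFiber$, whose fundamental group contains $\pi_1(\ManifoldFiber)$ and can thus be arbitrary. A concrete counterexample to the inequality as you state it: cut a mapping torus $\MappingTorus{\Diffeomorphism}$ of a closed hyperbolic surface $\Sigma$ into two copies of $\Sigma\times[0,1]$, glued along their full boundaries $\Sigma\sqcup\Sigma$ (one gluing by the identity, one by $\Diffeomorphism$). Your inequality would give $\SimplicialVolume{\MappingTorus{\Diffeomorphism}}\leq 2\,\SimplicialVolume{(\Sigma\times[0,1],\Sigma\times\{0,1\})}$, a bound not depending on $\Diffeomorphism$; but for $\Diffeomorphism$ pseudo-Anosov, $\MappingTorus{\Diffeomorphism^{k}}$ is a $k$-fold cyclic cover of $\MappingTorus{\Diffeomorphism}$, so $\SimplicialVolume{\MappingTorus{\Diffeomorphism^{k}}}=k\,\SimplicialVolume{\MappingTorus{\Diffeomorphism}}$ is unbounded. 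The same gap sinks your ``robust variant'', since its uniform constant $C$ comes from the same inequality.

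The genuine difficulty, which the paper's proof is designed to overcome, is producing relative fundamental cycles on the two halves whose boundaries match \emph{after} the clutching diffeomorphism, without introducing an uncontrolled correction chain supported in $\Sphere{\Dimension-1}\times\ManifoldFiber$. For an untwisted double this is automatic (reflect a single cycle), but for a nontrivial clutching map the correction chain is exactly what costs you. The paper first reduces, via an iterated bordism argument that preserves simplicial volume, to a bundle over $\Sphere{2}$, and then exploits that the clutching data is a loop in $\DiffZero{\ManifoldFiber}$: a transfer along a high-degree cover of $\Sphere{1}$ makes the boundary cycle small in the $\Sphere{1}$-direction, uniform continuity of the clutching function keeps its image small, and Yano's Lemma (Lemma~\ref{lem:Yano}) then supplies an efficiently bounded filling. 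Your observation that $\Dimension\geq 2$ enters via the degree-$2$ self-map of $\Sphere{\Dimension-1}$ is a reasonable sanity check, but it is not where the paper uses $\Dimension\geq 2$; rather, $\Dimension\geq 2$ guarantees the clutching function lands in a single path-component of $\Diff{\ManifoldFiber}$ and lets the reduction to $\Sphere{2}$ and the transfer over $\Sphere{1}$ go through.
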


\subsection*{Essentiality}
A notion--also introduced by Gromov and linked to manifold ideas in geometry--that is related to simplicial volume is the following:
\begin{dfn}
  A $\Dimension$-manifold $\ManifoldAlternative$ is called
  \introduce{essential}
  if the classifying map   of its universal covering, 
  $
    \ContinuousMap
    \colon
    \ManifoldAlternative
    \to
    \ClassifyingSpace{\HomotopyGroupOfObject{1}{\ManifoldAlternative}{\Point}}
  $,
satisfies
  \[
    0
    \neq
    \apply
      {\HomologyOfSpaceMorphism{\ContinuousMap}}
      {\FundamentalClass{\ManifoldAlternative}}
    \in
    \HomologyOfSpaceObject
      {d}
      {
        \ClassifyingSpace
        {\HomotopyGroupOfObject{1}{\ManifoldAlternative}{\Point}}
      }
      {\Integers}\!,
  \]%
  otherwise it is called \introduce{inessential}. If the analogous statement with $\Rationals$ coefficients holds, $\ManifoldAlternative$ is called \introduce{rationally essential}; otherwise it is called \introduce{rationally inessential}.

\end{dfn}
Since the classifying map of the universal covering induces an isomorphism on bounded cohomology \cite{Gromov1, Ivanov} and thus--equivalently--on $\ell^1$ homology, the simplicial volume of a manifold $\ManifoldAlternative$ only depends on the homology class $ {\HomologyOfSpaceMorphism{\ContinuousMap}}
      {\FundamentalClass{\ManifoldAlternative}}
    \in
    \HomologyOfSpaceObject
      {d}
      {
        \ClassifyingSpace
        {\HomotopyGroupOfObject{1}{\ManifoldAlternative}{\Point}}
      }
      {\Rationals}$. Thus it becomes evident that rational essentiality is a necessary
condition for the simplicial volume to be positive. This leads to the
following related question, a generalization of which we answer up to
the Novikov conjecture and with the exception of base $\Sphere{2}$.

\begin{question}
\label{que:2}
  Is every manifold $\ManifoldTotal$ which fibers over a simply
  connected sphere rationally inessential?
\end{question}


Recall that a manifold $\ManifoldAlternative$ is called flexible if there
exists a map $\ManifoldAlternative \to \ManifoldAlternative$ of degree
$d\not\in
\{0,\pm1\}$.
\begin{bigthm}[Non-essentiality of total spaces]
\label{thm:NonEssentiality}
  Let $\ManifoldFiber$ be a manifold whose fundamental group
  $\HomotopyGroupOfObject{1}{\ManifoldFiber}{\Point}$
  satisfies the Novikov conjecture, let $\ManifoldBase$ be
  a flexible $2$-connected manifold, and
  let $\ManifoldFiber \to \ManifoldTotal \to \ManifoldBase$
  be an oriented manifold bundle.
  Then $\ManifoldTotal$ is rationally inessential. 
  \end{bigthm}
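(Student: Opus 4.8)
The plan is to establish the conclusion — that $f_*\FundamentalClass{\ManifoldTotal}=0$ rationally for the classifying map $f$ of the universal cover — by playing a degree-$\Multiple$ self-map of $\ManifoldBase$ against the multiplicativity of the symmetric signature for fibre bundles and the Novikov-type injectivity of the $L$-theory assembly map. Set $\Group\coloneqq\HomotopyGroupOfObject{1}{\ManifoldFiber}{\Point}$. First I would record that, since $\ManifoldBase$ is $2$-connected, the homotopy exact sequence of $\ManifoldFiber\to\ManifoldTotal\to\ManifoldBase$ gives an isomorphism $\Group\xrightarrow{\,\cong\,}\HomotopyGroupOfObject{1}{\ManifoldTotal}{\Point}$; consequently the universal cover of $\ManifoldTotal$ restricts on each fibre to that of $\ManifoldFiber$, so $\UniversalCovering{\ManifoldFiber}\to\UniversalCovering{\ManifoldTotal}\to\ManifoldBase$ is again a fibration with ($2$-connected, in particular simply connected) base $\ManifoldBase$. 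Let $\ContinuousMap\colon\ManifoldTotal\to\ClassifyingSpace{\Group}$ denote the classifying map and put $n\coloneqq\dim\ManifoldTotal$; the goal is $f_*\FundamentalClass{\ManifoldTotal}=0$ in $\HomologyOfSpaceObject{n}{\ClassifyingSpace{\Group}}{\Rationals}$. Note that $\ContinuousMap$ restricted to a fibre is homotopic to the classifying map $\ContinuousMap_{\ManifoldFiber}$ of $\UniversalCovering{\ManifoldFiber}$, because $\pi_1$ of the fibre maps isomorphically onto $\pi_1(\ManifoldTotal)$ and, $\ManifoldBase$ being simply connected, all fibres are identified up to homotopy.

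Next I would use flexibility: pick $\phi\colon\ManifoldBase\to\ManifoldBase$ of degree $\Multiple\notin\{0,\pm1\}$ and (after smoothing) form the pulled-back $\ManifoldFiber$-bundle $\ManifoldTotal'\coloneqq\phi^{*}\ManifoldTotal\to\ManifoldBase$, equipped with the bundle map $\Phi\colon\ManifoldTotal'\to\ManifoldTotal$ covering $\phi$ and restricting to a diffeomorphism on each fibre. Then $\ManifoldTotal'$ is a closed, connected, oriented $n$-manifold, it is the total space of an oriented $\ManifoldFiber$-bundle over $\ManifoldBase$, $\pi_1(\ManifoldTotal')\cong\Group$, the map $\Phi$ has degree $\Multiple$, and $\Phi$ induces the identity on $\Group$ under the canonical identifications. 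Hence $\ContinuousMap\circ\Phi$ is freely homotopic to the classifying map $\ContinuousMap'\colon\ManifoldTotal'\to\ClassifyingSpace{\Group}$ of $\ManifoldTotal'$, and therefore
\[
  f'_*\FundamentalClass{\ManifoldTotal'}\;=\;f_*\bigl(\Phi_*\FundamentalClass{\ManifoldTotal'}\bigr)\;=\;\Multiple\cdot f_*\FundamentalClass{\ManifoldTotal}\qquad\text{in }\HomologyOfSpaceObject{n}{\ClassifyingSpace{\Group}}{\Rationals}.
\]

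The key input I would then invoke is the multiplicativity of the symmetric signature for fibre bundles over simply connected bases: it gives
\[
  \sigma^{*}(\ManifoldTotal,\ContinuousMap)\;=\;\sigma^{*}(\ManifoldBase)\cdot\sigma^{*}(\ManifoldFiber,\ContinuousMap_{\ManifoldFiber})\;=\;\sigma^{*}(\ManifoldTotal',\ContinuousMap')\qquad\text{in }L^{n}(\Integers\Group),
\]
since $\ManifoldTotal$ and $\ManifoldTotal'$ are both total spaces of $\ManifoldFiber$-bundles over $\ManifoldBase$ with the same fibrewise classifying map $\ContinuousMap_{\ManifoldFiber}$, so that the right-hand product depends only on $\ManifoldBase$ and on $(\ManifoldFiber,\ContinuousMap_{\ManifoldFiber})$ and not on the clutching data; only the rational statement is needed.

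Finally I would feed in the Novikov conjecture for $\Group$ in its assembly form: the rational assembly map $A_{\Rationals}\colon\bigoplus_{j\ge 0}\HomologyOfSpaceObject{n-4j}{\ClassifyingSpace{\Group}}{\Rationals}\to L^{n}(\Integers\Group)\otimes\Rationals$ is injective, and it carries $f_*\bigl(L(\ManifoldTotal)\cap\FundamentalClass{\ManifoldTotal}\bigr)$ to $\sigma^{*}(\ManifoldTotal,\ContinuousMap)\otimes\Rationals$ (and likewise for $\ManifoldTotal'$), where $L(-)$ is the total Hirzebruch $L$-class. Combining this with the identity above, injectivity of $A_{\Rationals}$ forces $f_*\bigl(L(\ManifoldTotal)\cap\FundamentalClass{\ManifoldTotal}\bigr)=f'_*\bigl(L(\ManifoldTotal')\cap\FundamentalClass{\ManifoldTotal'}\bigr)$; comparing the degree-$n$ components — which for $L(N)\cap\FundamentalClass{N}$ is simply $\FundamentalClass{N}$ — yields $f_*\FundamentalClass{\ManifoldTotal}=f'_*\FundamentalClass{\ManifoldTotal'}$, hence $f_*\FundamentalClass{\ManifoldTotal}=\Multiple\cdot f_*\FundamentalClass{\ManifoldTotal}$, and since $\Multiple\neq 1$ we get $f_*\FundamentalClass{\ManifoldTotal}=0$, i.e.\ $\ManifoldTotal$ is rationally inessential. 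The hard part will be marshalling the surgery-theoretic input correctly — the multiplicativity of the symmetric signature over simply connected (here $2$-connected) bases together with its precise compatibility with the reference maps to $\ClassifyingSpace{\Group}$, with the $L$-theory assembly map, and with the $L$-class; the homotopy-sequence bookkeeping, the equality $\deg\Phi=\Multiple$ (it covers a degree-$\Multiple$ map and is a fibrewise diffeomorphism), and the $\pi_1$-identifications are routine.
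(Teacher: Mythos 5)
Your proposal takes a genuinely different route from the paper. The paper cites a corollary of Weinberger (on maps of positive degree inducing isomorphisms on $\pi_1$ and on higher rational homotopy groups): under Novikov such a map must have degree $1$, and then one only needs to check, via the Serre spectral sequence of the pulled-back universal-cover bundle, that $\Phi\colon\phi^*\ManifoldTotal\to\ManifoldTotal$ satisfies the hypotheses but has degree $\lambda>1$. You instead unfold the assembly-map formulation of Novikov directly and invoke a multiplicativity statement for the symmetric signature. The bookkeeping (pullback bundle, $\pi_1$-identifications, degree of $\Phi$, the degree-$n$ component of $L(\cdot)\cap[\cdot]$, and the compatibility of assembly with $f_*$) is all fine.

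The step you correctly identify as the hard part is, however, a real gap as written, and I want to be precise about why. The Chern--Hirzebruch--Serre theorem gives multiplicativity of the \emph{integer} signature over simply connected bases, but what you need is an identity in $L^{n}(\Integers\Group)\otimes\Rationals$, and you additionally need that the right-hand side is \emph{independent of the clutching data}. This is not a product formula you can simply assert. The correct framework is the Lück--Ranicki surgery transfer: for a fibration $F\to E\xrightarrow{p}B$ one has $\sigma^*(E)=p^{!}\sigma^*(B)$, where $p^{!}\colon L^{*}(\Integers[\pi_1 B])\to L^{*+m}(\Integers[\pi_1 E])$ is an algebraically defined transfer determined by the symmetric Poincar\'e representation of $\pi_1 B$ on the chain complex of the fibre. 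What you actually need to argue is that when $\pi_1 B=1$ this representation is trivial, so $p^{!}$ (hence $\sigma^*(E)$, given $\sigma^*(B)$) depends only on the fibre $(F,f_F)$ and not on the higher clutching; and you need the symmetric (not merely quadratic) version of the transfer identity, compatibly with reference maps to $B\Group$. None of this is automatic; without spelling it out and citing the precise theorem, the claim $\sigma^*(E,f)=\sigma^*(E',f')$ is unjustified, and the whole argument hinges on it. By contrast, the paper's route avoids any multiplicativity input: Weinberger's corollary already packages the needed surgery-theoretic content into a single degree-one statement, and what remains is an elementary spectral-sequence check. If you want to pursue your route, make the transfer statement explicit and cite it; otherwise the paper's reduction to Weinberger is considerably cleaner.
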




The purpose of the later sections (3 - 5) is to present three independent applications:
\subsection*{Application to positive scalar curvature.}
Let $\SpacePSC{\Manifold}$ denote the space of Riemannian metrics on
$\Manifold$ whose scalar curvature is strictly positive at every point
('psc'). Questions about the existence and uniqueness of such metrics mean in
practice to decide whether this space is non-empty or contractible,
respectively. Numerous papers cover these topics, for instance see
\cite{HankeSchickSteimle, walsh_hspaces, BERW, crowleyschicksteimle,
erw_psc2}. Classically, one requires the occuring manifolds to be spin in
order to employ index theory to prove such results. Due to these
requirements, it seems as if no example of a manifold $\Manifold$ whose
universal cover has no spin structure was previously known to support
infinitely many non-isotopic psc metrics.
In Section \ref{Sec:PSC} we provide such an example that builds on the ideas we developed for the proof of Theorem \ref{bigthm:A}:

\begin{bigthm}[A non-spin manifold with infinitely many non-isotopic psc metrics]
\label{thm:PSC}
  Let $\MorseFunction$ be the Morse function on $T^7 = (\Sphere{1})^7 \subset
  \ComplexNumbers^7$ that is defined as the sum of real values of the complex
  coordinates and let $\SeparatingManifold = \MorseFunction^{-1}(0)$. Then
  $\pi_0
  (\SpacePSC{\SeparatingManifold \# (\ComplexNumbers P^2 \times \Sphere{2})})$
 is infinite.
\end{bigthm}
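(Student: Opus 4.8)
The plan is to run a mapping-torus argument that reuses the handle-theoretic idea behind Theorem~\ref{bigthm:A}. The first step is to see that $\SeparatingManifold = \MorseFunction^{-1}(0)$ carries a metric of positive scalar curvature (psc). The critical points of $\MorseFunction = \sum_{j=1}^{7}\cos\theta_j$ on $\Torus{7}$ are the points with all coordinates in $\{0,\pi\}$; at a point with exactly $k$ coordinates equal to $0$ one has $\MorseFunction = 2k-7$ and Morse index $k$. In particular $0$ is a regular value, and $\ManifoldAuxiliary := \MorseFunction^{-1}((-\infty,0])$ is a compact $7$-manifold with $\partial\ManifoldAuxiliary = \SeparatingManifold$ that, via $\MorseFunction$, admits a handle decomposition with handles of index $\le 3$ only. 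Hence $\SeparatingManifold$ is obtained from $\Sphere{6} = \partial\Ball{7}$ by a sequence of surgeries of codimension $\ge 4$, so by the Gromov--Lawson surgery theorem $\SeparatingManifold$ is psc; since $\ComplexNumbers P^2\times\Sphere{2}$ is psc (Fubini--Study times the round metric) and of dimension $6\ge 3$, the closed $6$-manifold $\ManifoldAlternative := \SeparatingManifold\#(\ComplexNumbers P^2\times\Sphere{2})$ is psc as well. I would also record the feature the statement advertises: as $\ComplexNumbers P^2\times\Sphere{2}$ is simply connected, $\HomotopyGroupOfObject{1}{\ManifoldAlternative}{\Point}\cong\HomotopyGroupOfObject{1}{\SeparatingManifold}{\Point}$, and the universal cover of $\ManifoldAlternative$ contains a lift of a copy of $\ComplexNumbers P^2$ with an open ball removed, to which the second Stiefel--Whitney class of $\ManifoldAlternative$ restricts non-trivially; hence the universal cover is not spin.

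The second step is to produce a suitable diffeomorphism. Applying Theorem~\ref{bigthm:A} with $\Group = \Integers^7$ and $\HomologyClass$ a generator of $\HomologyOfSpaceObject{7}{\Integers^7}{\Rationals}$ yields a diffeomorphism $\Diffeomorphism$ of a closed oriented $6$-manifold with fundamental group $\Integers^7$, fixing a point and acting trivially on $\pi_1$, whose mapping torus $\MappingTorus{\Diffeomorphism}$ admits a map to $\ClassifyingSpace{\Integers^7} = \Torus{7}$ under which $\FundamentalClass{\MappingTorus{\Diffeomorphism}}$ goes to $\lambda\FundamentalClass{\Torus{7}}$ for some non-zero $\lambda$; moreover---and this is the point one has to extract from the proof, which runs through Lawson's presentation of $\Torus{7}$ as a twisted double glued along $\MorseFunction^{-1}(0)$---the $6$-manifold may be taken to be our $\SeparatingManifold$ (whose fundamental group one checks to be $\Integers^7$). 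After isotoping $\Diffeomorphism$ to the identity near a ball, I would perform the connected sum defining $\ManifoldAlternative$ inside that ball and set $\Psi := \Diffeomorphism\#\Identity\in\Diff{\ManifoldAlternative}$, so that $\Psi^m = \Diffeomorphism^m\#\Identity$ for all $m$. For each $m\ge 1$ there is then a degree-one map $\MappingTorus{\Psi^m}\to\MappingTorus{\Diffeomorphism^m}$ collapsing the $\ComplexNumbers P^2\times\Sphere{2}$-summand fibrewise, while the $m$-fold cyclic cover $\MappingTorus{\Diffeomorphism^m}\to\MappingTorus{\Diffeomorphism}$ followed by the above map has degree $m\lambda$; composing, $\MappingTorus{\Psi^m}$ maps to $\Torus{7}$ with degree $m\lambda\neq 0$.

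The third step uses that $\MappingTorus{\Psi^m}$ is a closed orientable $7$-manifold admitting a map of non-zero degree to $\Torus{7}$: by the minimal-hypersurface theorem of Schoen and Yau---valid in this dimension and, decisively here, requiring no spin hypothesis, $\MappingTorus{\Psi^m}$ being non-spin---such a manifold admits no psc metric, for every $m\ge 1$. Finally I would combine this with the standard principle that if a psc metric $g$ on $\ManifoldAlternative$ is isotopic through psc metrics to $(\Psi^m)^{*}g$, then a sufficiently slowly stretched warped product over such an isotopy descends to a psc metric on $\MappingTorus{\Psi^m}$. Hence $[g]\neq[(\Psi^m)^{*}g]$ in $\pi_0(\SpacePSC{\ManifoldAlternative})$ for all $m\ge 1$; and since pullback along $\Psi$ is a self-homeomorphism of $\SpacePSC{\ManifoldAlternative}$, an equality $[(\Psi^j)^{*}g]=[(\Psi^k)^{*}g]$ with $j<k$ would force $[g]=[(\Psi^{k-j})^{*}g]$, a contradiction; so the classes $[(\Psi^m)^{*}g]$, $m\in\Integers$, are pairwise distinct and $\pi_0(\SpacePSC{\ManifoldAlternative})$ is infinite.

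I expect the main obstacle to be twofold. The conceptual core is the second step: one must verify that Theorem~\ref{bigthm:A}---concretely, Lawson's twisted-double decomposition of $\Torus{7}$ read off from $\MorseFunction$---indeed furnishes an essential mapping torus with fibre the prescribed manifold $\MorseFunction^{-1}(0)$. The more technical point is the non-existence of psc on the $\MappingTorus{\Psi^m}$: since these are non-spin, Dirac-operator arguments are unavailable and the minimal-hypersurface method is what one must invoke, which is precisely why the total dimension $7$---and hence the $6$-dimensional fibre together with the choice $\ComplexNumbers P^2\times\Sphere{2}$---is what makes the example go through.
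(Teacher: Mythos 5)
Your outline tracks the paper's proof closely -- same separating hypersurface $\SeparatingManifold = \MorseFunction^{-1}(0)$, same plan of producing a twisted-double diffeomorphism of $\SeparatingManifold$, same Schoen--Yau obstruction on the mapping tori, same concluding free-action argument -- and the first, third, and fourth steps are correct as you state them. (Your psc argument for $\SeparatingManifold$, via surgeries of codimension $\ge 4$ on $\Sphere{6}$ coming from the index-$\le 3$ handles of $\MorseFunction^{-1}((-\infty,0])$ plus Gromov--Lawson surgery, is a slightly more elementary variant of the paper's, which instead uses spin null-bordism of $\SeparatingManifold$ in $\Torus{7}=\ClassifyingSpace{\Integers^7}$; either works. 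Your observation that the universal cover is non-spin is correct and worth recording.)

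The genuine gap is exactly where you flag it, but it is more than a verification to ``extract from the proof.'' Theorem~\ref{bigthm:A} produces \emph{some} closed $6$-manifold with $\pi_1 \cong \Integers^7$, and its proof passes through Theorem~\ref{thm:TwistedDouble}, whose proof in general requires stabilizing the handle decomposition by cancelling $m$- and $(m+1)$-handle pairs (``replacing $\Manifold$ by $\Manifold\#\Sphere{2m+1}$''). Stabilization changes the middle-level hypersurface, so nothing in the statement or the general proof of Theorem~\ref{bigthm:A} guarantees that the fiber can be taken to be the given $\MorseFunction^{-1}(0)$. The paper avoids this issue entirely: rather than quoting the twisted-double theorem for $\Torus{7}$, it \emph{constructs} a diffeomorphism $\bar\Diffeomorphism\colon \NullBordism\to\NullBordism'$ by hand from the diagonal $\Sphere{1}$-action on $\Torus{7}$, restricts it to get $\Diffeomorphism$ on $\SeparatingManifold$ (already equal to the identity on an open set, so no isotopy-near-a-ball is needed before forming $\Diffeomorphism\#\Identity$), and only then invokes the bordism construction from the proof of Theorem~\ref{bigthm:A} to see that $\MappingTorus{\Diffeomorphism}\to\Torus{7}$ has non-zero degree. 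That explicit construction is the content your second step is missing, and it is the core of the argument rather than a routine check.
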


\subsection*{Application to macroscopic dimension.} In Section~\ref{Sec:Essentiality} we
combine our results on the simplicial volume of mapping tori with a result by Dranishnikov to conclude a
surprising fact about macroscopic dimension. This notion is 
defined for any metric space $X$ as the smallest nonnegative integer $k$ such
that there exists a Lipschitz map $f \colon X \to Y$ so that $Y$ is a
$k$-dimensional simplicial complex and there is a uniform bound on the
diameters of pre-images $f^{-1}(y), y \in Y$. We use Theorem \ref{bigthm:A}
to prove that in any dimension $2n+1\ge 7$, there exist two manifolds whose
universal coverings are quasi-isometric, but have different macroscopic
dimensions. For a precise formulation of our result, see Theorem
\ref{thm:MacroscopicDimensionAmenable}.

\subsection*{Application to characteristic classes.}
In the final section \ref{Sec:CharacteristicClasses}, we use Theorem
\ref{thm:NonEssentiality} to study certain characteristic classes, akin to
generalized Miller--Morita--Mumford classes, but pertaining to the cohomology
of the fundamental group of the fiber. In recent years, the study of
generalized Milller--Morita--Mumford classes has seen spectacular advances, in
particular the seminal work by Galatius--Randal-Williams that culminated in
\cite{GRW}, which showed that up to stabilization in many cases the cohomology
rings of diffeomorphism groups are generated by these cohomology
classes. Our result takes the fundamental group into account
and has some similarity with the vanishing result proven in
\cite{TautCl-4Authors}.

Suppose $M$ is an oriented manifold of dimension $d$ such that $\Group \coloneqq \pi_1(\ManifoldFiber)$ is non-trivial. The space $\maps(M,B\Group)$ of all maps $\Manifold \to B\Group$ admits a right action by the group $\Diff{\Manifold}$ of orientation-preserving diffeomorphisms of $\Manifold$, and the homotopy quotient
$B\DiffGroup{\Manifold} \coloneqq \maps(\Manifold,B\Group) /\!/ \Diff{\Manifold} \coloneqq \maps(\Manifold,B\Group) \times_{ \Diff{\Manifold}} E \Diff{\Manifold}$
is the base of a smooth $\Manifold$-bundle
\[\pi^{B\Group}\colon \Manifold \to \maps(\Manifold,B\Group) \times_{ \Diff{\Manifold} } \Manifold \to B\DiffGroup{\Manifold}\]
whose total space maps to $B\Group$ via evaluation. Now for
any class $\alpha \in \CohomologyOfSpaceObject{\ast}{B\Gamma}{\Rationals}$ we may form the ``$\kappa$-class'' $\kappa_{\alpha} \coloneqq  \int_{\pi^{B\Group}} \text{ev}^{\ast} \alpha \in \CohomologyOfSpaceObject{\ast-d}{B\DiffGroup{\Manifold}}{\Rationals}.$


\begin{bigthm}[Asphericity of $\kappa$-like classes from the fundamental group]
\label{thm:CharClassesHurewicz}
Let $\Dimension \geq 6$, let $\Gamma$ be a finitely presented group and let $\alpha \in \CohomologyOfSpaceObject{\Dimension + j}{B\Gamma}{\Rationals}$ be non-zero, where $j \geq 0$.
\begin{enumerate}
\item If $\Dimension = 2\HalfDimension$ and $\Gamma$ has the finiteness property $F_{\HalfDimension}$, there exists a closed connected smooth $\Dimension$-manifold $\ManifoldFiber$ such that $\pi_1{\ManifoldFiber} \cong \Gamma$ and $\kappa_{\alpha} \in \CohomologyOfSpaceObject{j}{ B\DiffGroup{\Manifold}}{\Rationals}$ is non-zero.
\item Assume $j \geq 3$. Then for any closed connected smooth $\Dimension$-manifold $\ManifoldFiber$ with $\pi_1{\ManifoldFiber} \cong \Gamma$, $\kappa_{\alpha}$ evaluates trivially on the image of the Hurewicz $\pi_j \left( B\DiffGroup{\Manifold}\right) \to \HomologyOfSpaceObject{j}{ B\DiffGroup{\Manifold}}{\Rationals}$.
\end{enumerate}
\end{bigthm}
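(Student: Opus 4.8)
The plan is to test $\kappa_\alpha$ against rational homology classes of $B\DiffGroup{\Manifold}$. By Thom's theorem every class in $\HomologyOfSpaceObject{j}{B\DiffGroup{\Manifold}}{\Rationals}$ has the form $g_{*}\FundamentalClass{X}$ for a closed oriented smooth $j$-manifold $X$ and a map $g\colon X\to B\DiffGroup{\Manifold}$; such a $g$ is exactly the classifying datum of a smooth $\Manifold$-bundle $\pi_E\colon E\to X$ together with a map $\mathrm{ev}_E\colon E\to\ClassifyingSpace{\Group}$ on its total space, and naturality of fibre integration gives $g^{*}\kappa_\alpha=\int_{\pi_E}(\mathrm{ev}_E)^{*}\alpha$, whence
\[
  \left\langle \kappa_\alpha,\; g_{*}\FundamentalClass{X}\right\rangle
  =\left\langle (\mathrm{ev}_E)^{*}\alpha,\; \FundamentalClass{E}\right\rangle
  =\left\langle \alpha,\; (\mathrm{ev}_E)_{*}\FundamentalClass{E}\right\rangle .
\]
So both parts come down to understanding the image $(\mathrm{ev}_E)_{*}\FundamentalClass{E}\in\HomologyOfSpaceObject{\Dimension+j}{\ClassifyingSpace{\Group}}{\Rationals}$ of the fundamental class of the total space of an $\Manifold$-bundle.

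For part (2) the relevant homology classes are the $a_{*}\FundamentalClass{\Sphere{j}}$ for maps $a\colon\Sphere{j}\to B\DiffGroup{\Manifold}$, so $\pi_E$ is a smooth bundle $\Manifold\to E\to\Sphere{j}$ with $j\geq3$. As $\Sphere{j}$ is $2$-connected, the homotopy exact sequence gives $\HomotopyGroupOfObject{1}{E}{\Point}\cong\HomotopyGroupOfObject{1}{\Manifold}{\Point}\cong\Group$; since $\ClassifyingSpace{\Group}$ is aspherical, $\mathrm{ev}_E$ factors up to homotopy through the classifying map $c_E\colon E\to\ClassifyingSpace{\HomotopyGroupOfObject{1}{E}{\Point}}$ of the universal covering, so $(\mathrm{ev}_E)_{*}\FundamentalClass{E}$ is a pushforward of $(c_E)_{*}\FundamentalClass{E}$ and hence vanishes as soon as $E$ is rationally inessential. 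But $E$ is the total space of a smooth fibre bundle over the $2$-connected, flexible manifold $\Sphere{j}$, so it is rationally inessential --- this is the unconditional statement announced in the introduction (equivalently the case of Theorem~\ref{thm:NonEssentiality} with spherical base, for which the Novikov hypothesis on the fibre is not needed). Here $j\geq3$ is used both to identify $\HomotopyGroupOfObject{1}{E}{\Point}$ and to make $\Sphere{j}$ admissible as a base.

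For part (1) one must instead produce a suitable bundle. Given a non-zero $\alpha\in\CohomologyOfSpaceObject{\Dimension+j}{\ClassifyingSpace{\Group}}{\Rationals}$, use Thom's theorem to choose a closed oriented smooth $(\Dimension+j)$-manifold $Z$ and a map $u\colon Z\to\ClassifyingSpace{\Group}$ with $\left\langle\alpha,u_{*}\FundamentalClass{Z}\right\rangle\neq0$. When $j=1$, Theorem~\ref{bigthm:A} applied to the non-zero class $u_{*}\FundamentalClass{Z}\in\HomologyOfSpaceObject{\Dimension+1}{\Group}{\Rationals}$ (legitimate since $\Dimension\geq6$) yields a $\Dimension$-manifold $\SeparatingManifold$ with $\HomotopyGroupOfObject{1}{\SeparatingManifold}{\Point}\cong\Group$ and a diffeomorphism whose mapping torus --- a smooth bundle over $\Sphere{1}$ --- carries a map to $\ClassifyingSpace{\Group}$ sending the fundamental class to a non-zero multiple of $u_{*}\FundamentalClass{Z}$; taking $\Manifold=\SeparatingManifold$ and $X=\Sphere{1}$ in the displayed identity then gives $\kappa_\alpha\neq0$. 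When $j=0$ the analogous input is the classical fact that every degree-$\Dimension$ homology class of $\Group$ is realised by a closed oriented $\Dimension$-manifold with fundamental group $\Group$ (surgery below the middle dimension, using property $F_{\HalfDimension}$ with $\Dimension=2\HalfDimension$). For general $j$ one first uses property $F_{\HalfDimension}$ and surgery below the middle dimension (which does not alter $u_{*}\FundamentalClass{Z}$) to improve $u$, in particular arranging $\HomotopyGroupOfObject{1}{Z}{\Point}\cong\Group$, and then realises a manifold bordant over $\ClassifyingSpace{\Group}$ to a non-zero multiple of $Z$ as the total space of a smooth bundle with $\Dimension$-dimensional fibre $\Manifold$, $\HomotopyGroupOfObject{1}{\Manifold}{\Point}\cong\Group$, over a $j$-manifold, following Lawson's twisted-double description of $Z$ that underlies Theorem~\ref{bigthm:A}, iterated over a torus base.

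The main obstacle is precisely this construction for general $j$ in part (1): one must force the fundamental group of the fibre to be exactly $\Group$ --- where property $F_{\HalfDimension}$ (with $\Dimension=2\HalfDimension$) and careful bookkeeping of the surgeries enter --- while equipping the $(\Dimension+j)$-manifold with a genuine fibre-bundle structure over a $j$-manifold without destroying its image in $\HomologyOfSpaceObject{\Dimension+j}{\ClassifyingSpace{\Group}}{\Rationals}$. Lawson's twisted doubles settle $j=1$ cleanly via Theorem~\ref{bigthm:A}, and $j=0$ is classical, but carrying this through for arbitrary $j$ --- above all for even $j$, where $\dim Z$ is even and Lawson's theorem does not apply directly --- is the delicate point and absorbs most of the technical work. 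Part (2), by contrast, is a short formal deduction once the rational inessentiality of total spaces of bundles over spheres is available; there the only points to verify are the $\pi_1$-computation and the naturality of fibre integration.
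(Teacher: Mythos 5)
Your opening reduction is sound and matches the paper: by Thom's theorem it suffices to pair $\kappa_\alpha$ against classes $g_*[X]$ for closed $j$-manifolds $X$, and the fibre-integration identity $\langle\kappa_\alpha, g_*[X]\rangle = \langle\alpha, (\mathrm{ev}_E)_*[E]\rangle$ is exactly what the paper uses for part (2): restrict to $X = S^j$, observe $\pi_1(E)\cong\Gamma$, and invoke rational inessentiality of $E$. One caveat: you assert that for a spherical base the Novikov hypothesis in Theorem~\ref{thm:NonEssentiality} is ``not needed''. That is not established anywhere in the paper; the proof of Theorem~\ref{thm:NonEssentiality} goes through Weinberger's rigidity statement (Corollary~\ref{cor:Weinberger}), which genuinely uses Novikov, and nothing in the spherical-base specialisation removes that dependence. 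The paper's own proof of part (2) quietly relies on the same theorem, so if there is a gap it is shared, but your claim that the hypothesis can be dropped is unsupported and you should not present it as known.

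For part (1) your route diverges entirely from the paper's, and here the proposal is incomplete in a way you yourself flag. The paper does not attempt to build explicit bundles by hand. Instead it builds a compact manifold $W$ with boundary homotopy equivalent to $B\Gamma^{(n)}$ (this is where $F_n$ enters, via finiteness of the $n$-skeleton and a local embedding into $\Reals^{2n}$), identifies the $n$-stage Moore--Postnikov factorisation of its tangent classifier as a tangential structure $\theta$ with $\textbf{MT}\theta \simeq \Sigma^{\infty-2n}_+ B\Gamma$, and then applies Galatius--Randal-Williams' Theorem~1.8 to produce a cobordism $K$ from $\partial W$ so that $\kappa_\alpha$ is detected in $H^j(B\mathrm{Diff}_\partial(W\cup K);\Rationals)$; taking $M$ to be the double of $W\cup K$ and mapping to $B\DiffGroup{M}$ finishes the argument. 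This machinery handles all $j\ge 0$ uniformly precisely because it works stably, whereas your twisted-double/Theorem~\ref{bigthm:A} approach is genuinely limited to $j=0$ and $j=1$: Lawson's theorem is an odd-dimensional statement, and iterating over a torus base does not produce a bundle over $S^j$ nor keep control of the fibre's fundamental group. You acknowledge this (``carrying this through for arbitrary $j$ \dots is the delicate point and absorbs most of the technical work''), but what you describe is not a proof strategy with a known filler; it is the missing step. The honest comparison is that the paper replaces the hands-on construction you are reaching for with the Galatius--Randal-Williams stability theorem, which is exactly what makes part (1) provable in this generality.
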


\subsection*{Acknowledgments}
We are indebted to S{\o}ren Galatius who showed us the way when this project was in its early phase by suggesting the main idea of Theorem \ref{thm:TwistedDouble}. We are also grateful to Bernhard Hanke for an extensive email correspondence, and to Georg Frenck for an inspiring conversation. Finally, we would like to thank Clara L\"oh, Wolfgang L\"uck, Oscar Randal-Williams, Thomas Schick, and Rudi Zeidler for helpful remarks.

\section{Twisted doubles, mapping tori, and group homology}\label{Sec:MappingTori}
In this section we prove Theorem~\ref{bigthm:A}. In order to represent any (rational)
odd-dimensional group homology class by a mapping torus, we first sketch the proof of a
general structure result for odd-dimensional manifolds, due to Lawson
\cite{Lawson}. To state it, we need the following definition:
\begin{dfn}
  Given an oriented, closed manifold $\SeparatingManifold$ together with an oriented 
  nullbordism $\NullBordism$ of $\SeparatingManifold$ and an orientation
  preserving diffeomorphism
  $
    \Diffeomorphism
    \colon
    \SeparatingManifold
    \to
    \SeparatingManifold
  $, 
  we call $\NullBordism \cup_{\Diffeomorphism} (- \NullBordism)$ the
  \introduce{$\Diffeomorphism$-twisted double of $\NullBordism$}. Here $- \NullBordism$ stands for $W$ with the opposite orientation.
\end{dfn}
Note that the requirement that $\Diffeomorphism$ is orientation-preserving ensures that the twisted double is again oriented.
Lawson proved that any manifold of odd dimension at least $7$
can be represented as a twisted double. Moreover the proof in \cite{Lawson}
yields an even stronger result given below, which was not stated in his work. Since
we need this strengthening, we will provide a sketch of Lawsons proof to justify it:
\begin{thm}[Twisted Double Theorem]
  \label{thm:TwistedDouble}
  Given a closed, orientable manifold $\Manifold$ of dimension $2m+1 \geq 7$,
  there exists a closed orientable manifold $\SeparatingManifold$, a
  pointed orientation preserving diffeomorphism
  $
    \Diffeomorphism
    \colon
    \SeparatingManifold
    \to
    \SeparatingManifold
  $,
  and an oriented nullbordism $\NullBordism$ of $\SeparatingManifold$ such that:
  \begin{enumerate}[(i)]
  \item
    \label{itm:TwistedDoubleSIdentity}
    $\Diffeomorphism$ induces the identity on the fundamental group of
    $\SeparatingManifold$.
  \item
    The $\Diffeomorphism$-twisted double of $\NullBordism$ is diffeomorphic to
    $\Manifold$.
  \item
    \label{itm:TwistedDoubleInclusionsIdentity}
    The inclusion $\SeparatingManifold \hookrightarrow \NullBordism$
    induces an isomorphisms on fundamental groups.
  \end{enumerate}
\end{thm}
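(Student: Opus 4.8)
The plan is to follow Lawson's strategy, starting from a handle decomposition of $\Manifold$ and using the odd dimension to "split" $\Manifold$ along a middle-dimensional hypersurface into two nullbordisms that are diffeomorphic after reversing orientation. First I would fix a self-indexing Morse function $\MorseFunction \colon \Manifold \to [0, 2m+1]$, so that the handles of index $\le m$ are attached "below" the level $m + \tfrac12$ and the handles of index $\ge m+1$ are attached "above" it. Set $\NullBordism \coloneqq \MorseFunction^{-1}([0, m + \tfrac12])$ and $\NullBordism' \coloneqq \MorseFunction^{-1}([m+\tfrac12, 2m+1])$, and let $\SeparatingManifold \coloneqq \MorseFunction^{-1}(m + \tfrac12)$ be the separating hypersurface, a closed orientable $2m$-manifold with $\Manifold = \NullBordism \cup_{\SeparatingManifold} \NullBordism'$. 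The key observation is that, by turning the Morse function upside down, $\NullBordism'$ is built from $-\SeparatingManifold$ by attaching handles of index $\le m$ (the cocores of the old high-index handles), so both $\NullBordism$ and $\NullBordism'$ are nullbordisms of $\SeparatingManifold$ obtained by attaching only handles of index $\le m$.

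Next I would argue that, after modifying the Morse function by handle trades and cancellations, one can assume $\NullBordism$ and $-\NullBordism'$ are diffeomorphic rel $\SeparatingManifold$, which yields the twisted-double description $\Manifold \cong \NullBordism \cup_{\Diffeomorphism}(-\NullBordism)$ for the resulting self-diffeomorphism $\Diffeomorphism$ of $\SeparatingManifold$. The real content here is an h-cobordism/surgery argument comparing the two half-bordisms: since both are obtained from $\SeparatingManifold$ by attaching handles of index $\le m$ in an ambient manifold of dimension $2m+1$, low-dimensional and middle-dimensional handle-cancellation techniques (Whitney trick, available since $2m+1 \ge 7$ so that $m \ge 3$) let one normalize the handle structures and match them up. In Lawson's original argument this is where the dimension hypothesis is genuinely used; I expect this matching step—making the two sides literally diffeomorphic rather than merely bordant—to be the main obstacle, and it is exactly the part that the excerpt says will be sketched rather than fully reproved.

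For the fundamental-group refinements \eqref{itm:TwistedDoubleSIdentity} and \eqref{itm:TwistedDoubleInclusionsIdentity}, which go beyond Lawson's stated result, I would exploit the freedom to add canceling handle pairs and to perform surgeries on the interior of $\NullBordism$. To get \eqref{itm:TwistedDoubleInclusionsIdentity}, i.e.\ $\pi_1(\SeparatingManifold) \xrightarrow{\ \cong\ } \pi_1(\NullBordism)$: surjectivity is automatic once there are no handles of index $> m$ above $\SeparatingManifold$ inside $\NullBordism$ relative to a thickened $\SeparatingManifold$—more precisely, after arranging the handle structure of $\NullBordism$ on $\SeparatingManifold \times [0,1]$ to have handles only of index $\ge 2$, which is possible by trading $0$- and $1$-handles—and injectivity is achieved by doing $1$-surgeries in the interior of $\NullBordism$ to kill the kernel, which does not change the boundary $\SeparatingManifold$. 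Then \eqref{itm:TwistedDoubleSIdentity} follows because $\Diffeomorphism \colon \SeparatingManifold \to \SeparatingManifold$ extends over $\NullBordism$ (it is the matching diffeomorphism composed with the obvious identification), hence the automorphism it induces on $\pi_1(\SeparatingManifold) \cong \pi_1(\NullBordism)$ is conjugation by an element coming from $\pi_1(\NullBordism)$, and after further adjusting $\Diffeomorphism$ by an inner automorphism of $\SeparatingManifold$ (realized by an isotopy, since $\SeparatingManifold$ is connected) one makes it act as the identity; the pointedness of $\Diffeomorphism$ is arranged by choosing the basepoint in $\SeparatingManifold$ and isotoping. Throughout, care is needed to keep all the handle manipulations orientation-compatible so that the twisted double stays oriented, but that is routine given that $\Diffeomorphism$ is orientation-preserving.
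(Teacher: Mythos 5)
Your overall setup—splitting $\Manifold$ at the middle level of a self-indexing Morse function into two halves $\NullBordism$, $\NullBordism'$ that are nullbordisms of $\SeparatingManifold$ built from handles of index $\le m$—is the same as the paper's. But the crux of the theorem, namely \emph{why} $\NullBordism$ and $-\NullBordism'$ are diffeomorphic relative to $\SeparatingManifold$, is exactly what you declare "the main obstacle" and leave out; saying "this is the part that will be sketched rather than fully reproved" is not a substitute. The paper's sketch actually supplies the mechanism: after stabilizing (which is what the dimension hypothesis buys you, per Lawson), one finds an $m$-dimensional subcomplex $K \subset \SeparatingManifold$ so that $K \to \SeparatingManifold \to \NullBordism_i$ is a \emph{simple} homotopy equivalence, takes a regular neighborhood $V$ of $K$ with complement $V'$, checks via Seifert--Van Kampen and Poincar\'e--Lefschetz duality with twisted coefficients that $(\NullBordism_i, V, V', \partial V \times I)$ is a relative s-cobordism, and applies the relative s-cobordism theorem to identify $\NullBordism_i \cong V \times I$ rel $V$.

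This finer structure is not decoration: it is what makes (\ref{itm:TwistedDoubleSIdentity}) and (\ref{itm:TwistedDoubleInclusionsIdentity}) come along for free, whereas your patches for those two items have real problems. For (\ref{itm:TwistedDoubleInclusionsIdentity}), no surgery is needed at all: read upside down, $\NullBordism$ is $\SeparatingManifold \times I$ with handles of index $\ge m+1 \ge 4$ attached, so $\pi_1(\SeparatingManifold) \to \pi_1(\NullBordism)$ is automatically an isomorphism. Worse, the $1$-surgeries you propose on the interior of $\NullBordism$ destroy conclusion (ii): if you change $\NullBordism$, the $\Diffeomorphism$-twisted double changes by two surgeries and is no longer $\Manifold$. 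For (\ref{itm:TwistedDoubleSIdentity}), the claim that "$\Diffeomorphism$ extends over $\NullBordism$" is not correct—$\Diffeomorphism$ extends to a diffeomorphism $\NullBordism \to \NullBordism'$, not to a self-diffeomorphism of $\NullBordism$; if it did extend to a self-diffeomorphism, $\Manifold$ would be an honest double. So there is no reason the induced automorphism of $\pi_1$ is inner, and the inner-automorphism-by-isotopy repair has nothing to hang on to. The paper instead gets (\ref{itm:TwistedDoubleSIdentity}) directly because the s-cobordism identifications $\NullBordism_i \cong V \times I$ can be chosen to be the identity on $V$, and $V \supset K$ carries all of $\pi_1(\SeparatingManifold)$, so the resulting gluing diffeomorphism is the identity on $\pi_1$.
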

\begin{proof}
  We start with a minimal handlebody decomposition of $\Manifold$: we
  denote the union of all handles of index less or equal
  to $m$ by $\NullBordism_1$ and the union of all handles of index bigger or
  equal than
  $m+1$ by $\NullBordism_2$.
  These two submanifolds with boundary intersect in their common boundary
  $\SeparatingManifold$.
  The goal is to find an $m$-dimensional subcomplex
  $K\subset \SeparatingManifold$ such that $K \to \SeparatingManifold \to
  \NullBordism_i$
  is a simple homotopy equivalence.
  For a general handlebody decomposition this is not necessarily possible, but
  it always becomes possible if one adds a sufficient number of canceling $m$
  and $m+1$-handles (or in other words "replaces $\Manifold$ by $\Manifold \#
  \Sphere{2m+1}$"), which is proven in \cite{Lawson}. Let us assume that the
  handlebody decomposition has been stabilized like that.

  Let $V$ denote a regular neighborhood of $K \subset \SeparatingManifold$.
  Since the codimension of $K\subset \SeparatingManifold$ is at least $3$, we
  conclude that
  $\HomotopyGroupOfObject{1}{\partial V}{\Point}$ is isomorphic to the
  fundamental group of $K$.
  Fix a tubular neighborhood $\partial V\times
  I\subset \SeparatingManifold$ and denote the complement of $V \cup \partial V \times I$ by
  $V'$.
  Now $\NullBordism_i$ yields a relative bordism
  $
    \left(
      \NullBordism_i,
      V,
      V',
      \partial V \times I
    \right)
  $.

  Using the Seifert--Van Kampen Theorem one can conclude that the inclusion $V'
  \to \SeparatingManifold$ induces an isomorphism on fundamental groups and by
  Poincaré--Lefschetz Duality for twisted coefficients one concludes that the
  inclusion $V' \to \NullBordism_i$ is in fact a homotopy equivalence.
  Hence the aforementioned relative bordism is a relative s-cobordism  and
  hence by the relative s-cobordism theorem isomorphic
  to $V \times I$, furthermore the diffeomorphism between $\NullBordism_i$ and $V\times I$ can be chosen to be the identity on $V$ and hence induces the identity on
  $\pi_1$. Note that if we fix an orientation on $\SeparatingManifold$ and fix the
  induced orientation on
  $V$, then only one of the above diffeomorphisms can be orientation preserving
  and the other one is going to be orientation reversing.
  All in all this implies that $\Manifold$ is isomorphic to
  $\NullBordism_1\cup_{\Diffeomorphism} -\NullBordism_1$ and that this representation has
  the required properties.
  In particular the last point follows, because both inclusion $\SeparatingManifold \to
  \NullBordism_1$ and $\NullBordism_1 \to \Manifold$ induce an isomorphism on
  fundamental groups.
\end{proof}
Using this theorem we proceed by proving Theorem \ref{bigthm:A}.
\begin{proof}[Proof of Theorem~\ref{bigthm:A}]
  By Thom's solution of the rational Steenrod problem \cite{ThomQuelques}, there exists a $0\neq \Multiple\in
  \Integers$ such that $\Multiple \HomologyClass$ is representable by a closed
  orientable manifold $\Manifold$, i.e., there exists a map
  $
    \ContinuousMap'
    \colon
    \Manifold
    \to
    \ClassifyingSpace{\Group}
  $
  such that
  $
    \apply
      {\HomologyOfSpaceMorphism{\ContinuousMap'}}
      {\FundamentalClass{\Manifold}}
    =
    \Multiple \HomologyClass
  $.
  Since the dimension of $\Manifold$ is at least $5$, by performing surgery
  on $\Manifold$, we can assume that $\ContinuousMap'$ induces an isomorphism on
  fundamental groups.

  By Theorem~\ref{thm:TwistedDouble}, the manifold $\Manifold$ is diffeomorphic to a
  $\Diffeomorphism$-twisted double, where
  $
    \Diffeomorphism
    \colon
    \SeparatingManifold
    \to
    \SeparatingManifold
  $
  is a diffeomorphism and $\NullBordism$ a nullbordism of $\SeparatingManifold$,
  with the properties listed in that theorem.
  In particular by (\ref{itm:TwistedDoubleSIdentity}) the fundamental group of
  the mapping torus $\MappingTorus{\Diffeomorphism}$ is isomorphic to
  $\Group \times \Integers$. Let us consider the composition of the
  classifying map of the universal covering of
  $\MappingTorus{\Diffeomorphism}$ with the map induced by the projection
  $\Group \times \Integers\to \Group$ and denote it by
  $
    \ContinuousMap
    \colon
    \MappingTorus{\Diffeomorphism}
    \to
    \ClassifyingSpace{\Group}
  $.
  We will show that $\ContinuousMap'$ and $\ContinuousMap$ are bordant and hence they map
  the respective fundamental class to the same homology class in
  $\ClassifyingSpace{\Group}$.
  The mapping torus $\MappingTorus{\Diffeomorphism}$ is defined as
  $
    \SeparatingManifold
    \times
    [-1,1]
    /
    \sim
  $
  where we have $(-1,x)\sim (1,\apply{\Diffeomorphism}{x})$, let us denote
  $\SeparatingManifold \times [-0.5,0.5]$ by $N$.
  The manifold
  $
    \Bordism
    =
    \MappingTorus{\Diffeomorphism}
    \times
    [0,1]
    \cup_{N\times \{1\}}
    \NullBordism \times [-0.5,0.5]$
  gives a bordism between $\MappingTorus{\Diffeomorphism}$ and $\NullBordism
  \cup_\Diffeomorphism \NullBordism = \Manifold$.
  Furthermore an easy Seifert-Van-Kampfen argument shows that
  $\HomotopyGroupOfObject{1}{\Bordism}{\Point}$ is
  isomorphic to
  $
    \Group
    \times
    \Integers
  $.
  In particular, the composition of the classifying map for the universal
  covering of $\Bordism$ with the map corresponding to the projection
  $
    \Group \times \Integers \to \Integers
  $
  on fundamental groups yields a bordism between $\ContinuousMap'$ and
  $\ContinuousMap$.
\end{proof}
We conclude this section by deducing Corollary \ref{cor:mappingtori}.
\begin{proof}
  The map $\ContinuousMap$ in Theorem \ref{bigthm:A} induces an
  isometry in $\ellone$-homology because $\Integers$ is amenable. Hence the simplicial
  volume of the constructed mapping torus agrees with the $\ellone$-norm of the homology
  class $\HomologyClass\in \HomologyOfGroupObject{n}{\Group}{\Reals}$.

  Similarly, given a manifold $\Manifold$, the simplicial volume of $\Manifold$ agrees
  with the $\ellone$-norm of the image of the fundamental class in
  $
    \HomologyOfGroupObject
      {n}
      {\ClassifyingSpace{\HomotopyGroupOfObject{1}{\Manifold}{\ast}}}
      {\Reals}
  $%
  , because the classifying map of the universal covering induces the identity on
  fundamental groups. This concludes the first part of the corollary.

  Letting $\Manifold$ vary over all
  $2\HalfDimension+1$-manifolds, the main result of \cite{LoehHeuer}, which
  states that the spectrum of
  simplicial volume is dense in degrees at least $4$, implies the second assertion.
\end{proof}

\begin{rem}
Corollary \ref{cor:mappingtori} can be contrasted with recent work by L\"oh--Moraschini \cite[Cor.~4.10]{LoehMoraschini} that provides vanishing results for the simplicial volume of mapping tori with assumptions on the amenable category (a generalization of the LS-category) of the fiber.
 \end{rem}
 
 \begin{rem}
It should be noted that the manifolds $\Manifold$ in Corollary \ref{cor:mappingtori} can be assumed to be inessential, as the proof of Theorem \ref{bigthm:A}, upon which the proof of Corollary \ref{cor:mappingtori} is built, shows that these manifolds are nullbordant over $B\pi_1(\Manifold)$.
 \end{rem}

\section{Vanishing results}\label{Sec:NonEssentiality}
We next prove the two vanishing results outlined in the introduction, Theorems~\ref{thm:VanishingSimplicialVolume} and \ref{thm:NonEssentiality}, starting with the latter. For this, we make use of the following
corollary from \cite{WeinbergerFixedPoint} that we only state for connected manifolds and spell out in detail to make the notion of `rational isomorphism' completely explicit.
\begin{cor}[Rationally bijective self-maps of rationally essential manifolds]
\label{cor:Weinberger}
  Suppose $\Manifold$ is an oriented, rationally essential manifold, and the
  Novikov conjecture holds for $\HomotopyGroupOfObject{1}{\Manifold}{\Point}$
  and suppose that $\SmoothMap \colon \Manifold' \to \Manifold$, where
  $\Manifold'$ is also oriented, denotes a smooth map, satisfying:
  \begin{itemize}
  \item
    induces a map of positive sign on top homology (w.r.t.~the fixed orientations),
  \item
    induces an isomorphism on fundamental groups, and
  \item
    induces an isomorphism on all higher rational homotopy groups (or
    equivalently on all rational homology groups of the universal covering),
  \end{itemize}
  Then $\SmoothMap$ necessarily has degree $1$.
\end{cor}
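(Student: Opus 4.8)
The plan is to deduce this statement from \cite{WeinbergerFixedPoint} by checking that the three listed conditions on $\SmoothMap$ amount to exactly the hypothesis used there: that $\SmoothMap$ is a rational homotopy equivalence compatible with the classifying maps of the universal coverings — a ``rationally bijective'' map over $\ClassifyingSpace{\Group}$, where $\Group \coloneqq \pi_1(\Manifold)$ — after which the cited theorem forces $\deg \SmoothMap = \pm 1$ and the sign hypothesis upgrades this to $\deg \SmoothMap = 1$.

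To set this up I would first fix compatible basepoints and, using the isomorphism on fundamental groups, lift $\SmoothMap$ to a $\Group$-equivariant map $\UniversalCovering{\SmoothMap} \colon \UniversalCovering{\Manifold'} \to \UniversalCovering{\Manifold}$ of universal coverings. Since $\pi_i(\UniversalCovering{\Manifold}) = \pi_i(\Manifold)$ for $i \ge 2$ and both universal coverings are simply connected, a mod-$\mathcal{C}$ Hurewicz (Serre class) argument gives the equivalence already asserted in the third condition: $\UniversalCovering{\SmoothMap}$ is a $\Rationals$-homology isomorphism if and only if it is a $\Rationals$-homotopy isomorphism. In either case $\SmoothMap$ itself is then a $\Rationals$-homology isomorphism of $\Manifold'$ and $\Manifold$: plugging $\UniversalCovering{\SmoothMap}$ into the Cartan--Leray spectral sequence $H_p(\Group; H_q(\UniversalCovering{\Manifold};\Rationals)) \Rightarrow H_{p+q}(\Manifold;\Rationals)$, which is natural in the pair consisting of a space and a map to $\ClassifyingSpace{\Group}$, one sees that $\UniversalCovering{\SmoothMap}$ induces an isomorphism of $\Rationals[\Group]$-modules $H_q(\UniversalCovering{\Manifold'};\Rationals) \to H_q(\UniversalCovering{\Manifold};\Rationals)$ and the identity on $\Group$, hence an isomorphism of $E^2$-pages and so of abutments. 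In particular $\deg \SmoothMap \neq 0$, so by the first condition $\deg \SmoothMap$ is a positive integer.

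Using the isomorphism on $\pi_1$ once more, $\Manifold' \xrightarrow{\SmoothMap} \Manifold \to \ClassifyingSpace{\Group}$ is a classifying map for $\UniversalCovering{\Manifold'}$, so $\SmoothMap$ respects the structures over $\ClassifyingSpace{\Group}$. Now every hypothesis of \cite{WeinbergerFixedPoint} is in place — $\Manifold$ is rationally essential, $\Group$ satisfies the Novikov conjecture, and $\SmoothMap$ is an orientation-compatible, rationally bijective map over $\ClassifyingSpace{\Group}$ — and the cited result yields $\deg \SmoothMap = \pm 1$; combined with positivity this gives $\deg \SmoothMap = 1$.

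I expect the only genuine obstacle to be the input from \cite{WeinbergerFixedPoint} itself: the classical homotopy invariance of higher signatures (the Novikov conjecture in its usual form) constrains only degree-one maps, whereas here one must also exclude degrees $\ge 2$ — that is, rule out that a rationally essential manifold can be rationally self-similar with positive multiplicity — and this requires the fixed-point/surgery-theoretic machinery of that reference. On our side nothing is difficult; the one point deserving a line of care is the Cartan--Leray comparison above, since $\UniversalCovering{\Manifold}$ need not be compact and $H_\ast(\UniversalCovering{\Manifold};\Rationals)$ may be infinite-dimensional — but this is harmless, as only naturality and the $\Rationals[\Group]$-module structure enter.
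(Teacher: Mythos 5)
The paper does not prove this statement: it is presented without proof as a corollary drawn from \cite{WeinbergerFixedPoint}, with the hypotheses merely ``spelled out in detail to make the notion of `rational isomorphism' completely explicit,'' so there is no in-paper argument to compare against. Your proposal correctly identifies this structure and fills in the verification that the paper elides---the mod-$\mathcal{C}$ Hurewicz equivalence between the two forms of the third hypothesis, the Cartan--Leray comparison showing that $\SmoothMap$ induces an isomorphism on rational homology (hence has nonzero, and by the sign hypothesis positive, degree), and the compatibility of $\SmoothMap$ with the classifying maps to $B\Gamma$---before deferring, exactly as the paper does, to \cite{WeinbergerFixedPoint} for the core content that such a rationally bijective map over $B\Gamma$ into a rationally essential manifold whose fundamental group satisfies the Novikov conjecture must have degree $\pm1$.
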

\begin{proof}[Proof of Theorem~\ref{thm:NonEssentiality}]
We construct a self-map of $\ManifoldTotal$ that has degree bigger than $1$ and satsifies all three assumptions of Corollary~\ref{cor:Weinberger}. This implies that $\ManifoldTotal$ is rationally inessential.

  Let $\SmoothMap \colon \ManifoldBase \to \ManifoldBase$ denote a map of
  degree $d\neq -1, 0, 1$. By potentially replacing $\SmoothMap$ by $\SmoothMap
  \circ\SmoothMap$ we may assume that $d > 1$.
  We consider the following pullback diagram:
  \[
    \begin{tikzcd}
      \SmoothMap^* \ManifoldTotal
        \ar[r, "\SmoothMap_{\ManifoldTotal}"]
        \ar[d]
      &
      \ManifoldTotal
        \ar[d]
      \\
      \ManifoldBase
        \ar[r, "\SmoothMap"]
      &
      \ManifoldBase
    \end{tikzcd}
  \]
  We prove below that the map labeled $\SmoothMap_{\ManifoldTotal}$ has the
  three labeled properties from Corollary~\ref{cor:Weinberger} above. On the other hand it follows either from the Serre spectral sequence or by counting preimages of a regular value that this map has degree $d > 1$, so in conclusion it follows that $\ManifoldTotal$ cannot be
  rationally essential.

  The first condition of Corollary \ref{cor:Weinberger} holds, since the orientations of the fibers of the two bundles agree and $\SmoothMap$ has positive degree.

  Since $\ManifoldBase$ is $2$-connected, the long exact sequences in homotopy
  groups for both fiber bundles show that $\SmoothMap_{\ManifoldTotal}$ induces
  an isomorphism on fundamental groups as well.

  For the third assumption of Corollary \ref{cor:Weinberger}, consider the following diagram, obtained from the
  one above by applying the universal covering functor to it:
  \[
    \begin{tikzcd}
      \UniversalCovering{\SmoothMap^* \ManifoldTotal}
      \ar[r, "\UniversalCoveringMap{\SmoothMap_{\ManifoldTotal}}"]
      \ar[d]
      &
      \UniversalCovering{\ManifoldTotal}
      \ar[d]
      \\
      \ManifoldBase
      \ar[r, "\SmoothMap"]
      &
      \ManifoldBase
    \end{tikzcd}
  \]
  Since $\ManifoldBase$ is $2$-connected, we have that
  $\UniversalCovering{\SmoothMap^* \ManifoldTotal}$ is isomorphic to
  $\SmoothMap^* \UniversalCovering{\ManifoldTotal}$. Furthermore a map of
  non-zero degree has to be injective on rational cohomology, since the
  cup product pairing is non-singular.
  Hence $\SmoothMap$ induces an isomorphism on rational homology of
  $\ManifoldBase$. Combined this implies that the maps in the second diagram
  induce an isomorphism between the $E^2$ pages of the corresponding fiber
  bundles.
  This implies that $\UniversalCoveringMap{\SmoothMap_{\ManifoldTotal}}$
  induces an isomorphism on rational cohomology as required.
\end{proof}

\begin{rem}
  Consider the real projectivization of the tautological quaternionic bundle over $\Sphere{4} \cong \mathbf{H}P^1$. The pullback of the fiber sequence
  $
    SU(2)/ (\Integers / 2\Integers)
    \to
    \ClassifyingSpace{\Integers/2\Integers}
    \to
    \ClassifyingSpace{SU(2)}
  $
  along the inclusion $\Sphere{4} \to \ClassifyingSpace{SU(2)}$ of the
  $4$-skeleton yields the isomorphic smooth fiber bundle
  $
    \Reals P^3
    \to
    \Reals P^7
    \to
    \Sphere{4}
  $. The total space of this bundle is clearly essential.
  Thus, the `rational' in the conclusion of Theorem~\ref{thm:NonEssentiality}
  cannot be omitted.
\end{rem}

In order to prove Theorem~\ref{thm:VanishingSimplicialVolume} we will need the
notion of diameter of a simplex. If $\TopologicalSpace$ is a metric space and
$
  \sigma
  =
  \sum
  \sigma_i
$
is a chain in $\TopologicalSpace$, then we call
$
  \sup_i
  \apply{\mathrm{diam}}{\sigma_i}
$ the \introduce{diameter of $\sigma$}, where
$\apply{\mathrm{diam}}{\sigma_i}$ denotes the diameter of the image of
$\sigma_i$.
We will use the following lemma from \cite{YanoCircleAction}.
\begin{lem}[Lemma~1 in \cite{YanoCircleAction}] \label{lem:Yano}
  Let $\TopologicalSpace$ denote a compact metrized polyhedron, $m$ a
  non-negative integer and $\epsilon$ a positive number. Then there exist
  positive constant $\delta$ and $C$ such that:
  If a singular cycle $z\in C_m(\TopologicalSpace)$ is homologous to zero
  and its diameter is bounded by $\delta$, then there exists a chain $w\in
  C_{m+1}(\TopologicalSpace)$ such that $\partial w = z$, the diameter of
  $w$ is bounded by $\epsilon$ and $\SimplicialVolume{w} \leq C
  \SimplicialVolume{z}$.
\end{lem}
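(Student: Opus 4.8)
The plan is a ``controlled simplicial approximation'': I would first build, once and for all, a chain homotopy of uniformly bounded $\ell^1$-norm between the identity on small-diameter chains and a chain map into the simplicial chains of a very fine triangulation, and then exploit that a \emph{fixed finite} simplicial complex has uniformly bounded simplicial fillings. Concretely: fix a triangulation $K$ of $X$ whose $d$-mesh (the supremum of $d$-diameters of closed simplices) is $<\epsilon/4$; this is possible since the $d$-mesh of iterated barycentric subdivisions tends to $0$. Writing $\operatorname{st}(v)$ for the open star of a vertex $v$, choose $\delta>0$ so small that every metric ball $B_d(x,\delta)$ lies in some $\operatorname{st}(v)$, and fix a function $x\mapsto v(x)\in K^{(0)}$ with $B_d(x,\delta)\subseteq\operatorname{st}(v(x))$. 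The observation driving the argument is: if a singular simplex $\sigma\colon\Delta^k\to X$ has $d$-diameter $<\delta$, then all its vertices $\sigma(e_0),\dots,\sigma(e_k)$ lie in $\operatorname{st}(v(\sigma(e_0)))$, and $v(\sigma(e_0)),\dots,v(\sigma(e_k))$ are all vertices of one simplex of $K$, namely of the carrier of $\sigma(e_0)$ (the unique simplex of $K$ containing $\sigma(e_0)$ in its relative interior), because $\sigma(e_0)$ lies within $\delta$ of each $\sigma(e_i)$ and hence in $\operatorname{st}(v(\sigma(e_i)))$.

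On the $\partial$-stable subgroup $C_*^{<\delta}(X)\subseteq C_*(X)$ generated by singular simplices of $d$-diameter $<\delta$, define $\psi(\sigma)$ to be the affine singular simplex on the vertices $v(\sigma(e_0)),\dots,v(\sigma(e_k))$; by the observation this is a simplex of $K$, and $\partial\psi=\psi\partial$ because $\psi$ depends only on vertices, so $\psi\colon C_*^{<\delta}(X)\to C_*^{\mathrm{simp}}(K)$ is a chain map with $\|\psi(\sigma)\|\le\|\sigma\|$. Set $\Psi(\sigma):=\bigcup_{i=0}^{k}\operatorname{st}(v(\sigma(e_i)))$. Then $\Psi(\partial_i\sigma)\subseteq\Psi(\sigma)$ (functoriality), both $\sigma$ and $\psi(\sigma)$ have image in $\Psi(\sigma)$, each $\Psi(\sigma)$ has $d$-diameter $<4\operatorname{mesh}(K)<\epsilon$, and $\Psi(\sigma)$ is contractible: it is the union of open stars of the vertices of a single simplex, these stars and all their multiple intersections are open stars of simplices (hence contractible), so their nerve is a full simplex and the nerve lemma gives that $\Psi(\sigma)$ is weakly, hence (being an open subpolyhedron) genuinely, contractible.

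Next I would construct, by induction on $k$, for each singular $k$-simplex $\sigma$ of $d$-diameter $<\delta$ a chain $D(\sigma)\in C_{k+1}(\Psi(\sigma))$ with $\partial D(\sigma)+D(\partial\sigma)=\sigma-\psi(\sigma)$. For $k=0$ take a short path from $v(\sigma(e_0))$ to $\sigma(e_0)$ inside $\operatorname{st}(v(\sigma(e_0)))$. For the step, $\sigma-\psi(\sigma)-D(\partial\sigma)$ is a $(k+1)$-cycle whose simplices lie in $\Psi(\sigma)$ (using $\partial D+D\partial=\operatorname{id}-\psi$ in lower degrees together with functoriality of $\Psi$), so it bounds in the contractible set $\Psi(\sigma)$; I fill it by coning against a contraction of $\Psi(\sigma)$ \emph{fixed in advance} --- possible because $K$ is finite, so only finitely many sets $\Psi(\sigma)$ occur. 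Coning a $j$-cycle against a fixed contraction produces a filling of norm at most $(j+2)$ times that of the cycle (a prism over a $j$-simplex has $j+1$ summands, plus one more to kill the resulting constant cycle), whence $\|D(\sigma)\|\le(k+3)(\|\sigma\|+\|\psi(\sigma)\|+\|D(\partial\sigma)\|)$; this recursion unwinds to a bound $\|D(\sigma)\|\le C''$ with $C''$ depending only on $m$ and $K$. Extending $D$ linearly gives a chain homotopy with $\|D(z)\|\le C''\|z\|$ all of whose simplices have diameter $<\epsilon$. Now let $z\in C_m(X)$ be a null-homologous cycle with $\operatorname{diam}(z)<\delta$. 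Then $z=\psi(z)+\partial D(z)$, so $\psi(z)$ is a simplicial $m$-cycle that is null-homologous in $|K|=X$, hence null-homologous in $C_*^{\mathrm{simp}}(K)$. Since $K$ is a fixed finite complex, $\partial\colon C_{m+1}^{\mathrm{simp}}(K)\to B_m^{\mathrm{simp}}(K)$ is a surjection of finitely generated free abelian groups (resp.\ of finite-dimensional vector spaces, over $\mathbf R$), hence has a bounded section, producing $\xi$ with $\partial\xi=\psi(z)$, $\|\xi\|\le C(K)\|\psi(z)\|\le C(K)\|z\|$, and all simplices of $\xi$ of diameter $<\operatorname{mesh}(K)<\epsilon$. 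Then $w:=\xi+D(z)$ satisfies $\partial w=z$, has diameter $<\epsilon$, and $\|w\|\le(C(K)+C'')\|z\|=:C\|z\|$.

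The step I expect to be the main obstacle is securing the \emph{uniform} norm bound on $D$, which must be defined on infinitely many singular simplices so that a naive induction gives no control; this is resolved by the two finiteness inputs --- the carriers $\Psi(\sigma)$ take only finitely many values because $K$ is finite (letting one fix the contractions in advance), and the dimensional recursion terminates after $m$ steps --- combined with the elementary estimate for coning against a fixed contraction. A secondary point requiring care is arranging $\Psi$ to be simultaneously functorial, of diameter $<\epsilon$, and contractible, which is precisely what makes the nerve lemma applicable.
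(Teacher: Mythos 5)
The paper does not prove this lemma; it cites it directly from Yano's article, so there is no in-paper proof to compare against. Your reconstruction is correct and in fact follows the same route as Yano's original argument: simplicial approximation by a carrier-controlled chain homotopy, followed by a bounded linear filling in a fixed finite simplicial chain complex. All the essential points are in place: the Lebesgue-number choice of $\delta$, the verification that the vertices $v(\sigma(e_0)),\dots,v(\sigma(e_k))$ span a simplex of $K$ (via the carrier of $\sigma(e_0)$), functoriality and contractibility of $\Psi(\sigma)$ via the nerve lemma, the finiteness of the set $\{\Psi(\sigma)\}$ allowing contractions to be fixed in advance, and the bounded section of $\partial\colon C^{\mathrm{simp}}_{m+1}(K)\twoheadrightarrow B^{\mathrm{simp}}_m(K)$ on a finite-dimensional normed space. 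The identity $\partial w = \psi(z) + (z - \psi(z) - D(\partial z)) = z$ then closes the argument.

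Two small imprecisions, neither of which damages the proof. First, your norm estimate for filling a $j$-cycle in a contractible set via a fixed contraction overcounts: the genuine cone operator $C$ attached to a contraction sends each singular simplex to a single simplex, gives $\partial C(c)=c$ for a cycle in positive degree, and satisfies $\|C(c)\|=\|c\|$; the factor $(j+2)$ comes from conflating this with the prism operator, and while harmless (any dimension-dependent constant suffices), the cleaner estimate would simplify the recursion to $\|D(\sigma)\|\le 2+\|D(\partial\sigma)\|$. Second, $\psi$ lands in the ordered/degenerate simplicial chain complex of $K$ rather than the usual one (since $v(\sigma(e_i))$ may repeat and carries no canonical order); either pass to the ordered complex, which is still finite-dimensional and chain-equivalent to the usual one, or note directly that the image of $\psi$ lies in the finite-dimensional subcomplex of $C_*(X)$ spanned by affine simplices with vertices in $K^{(0)}$ spanning a face of $K$. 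With these cosmetic adjustments your proposal is a complete proof.
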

\begin{proof}[Proof of Theorem~\ref{thm:VanishingSimplicialVolume}]
  In a first step we will replace the bundle by one with the same
  simplicial volume which fibers over $\Sphere{2}$. Then we
  will leverage that the clutching function is defined over $\Sphere{1}$,
  which will allow us to construct very efficient cycles.

  Let $\ManifoldFiber \to \ManifoldTotal \to \Sphere{n}$ denote a bundle, then
  by trivializing the bundle on the upper and lower hemisphere one sees that
  the total space is a twisted double of $\Ball{n}\times \ManifoldFiber$
  twisted along a clutching function
  $
    \ClutchingFunction{\ManifoldTotal}
    \colon
    \Sphere{n-1}
    \to
    \DiffZero{\ManifoldFiber}
  $%
  . By abuse of notation, we also denote by $\ClutchingFunction{\ManifoldTotal}$
  the adjoint diffeomorphism from $\Sphere{n-1} \times \ManifoldFiber$ to
  itself.
  Completely analogous to the proof of Theorem~\ref{thm:TwistedDouble}, the
  manifold
  $
    \MappingTorus{\ClutchingFunction{\ManifoldTotal}}
    \times
    [0,1]
    \cup
    \Ball{n}
    \times
    \ManifoldFiber
    \times
    [0,1]
  $%
  , where $\Ball{n}\times \ManifoldFiber \times [0,1]$ is glued to the
  $[\frac{1}{3},\frac{2}{3}]$ part of the mapping torus,
  yields a bordism from $\ManifoldTotal$ to
  $\MappingTorus{\ClutchingFunction{\ManifoldTotal}}$ with fundamental group
  $\HomotopyGroupOfObject{1}{\ManifoldFiber}{\Point} \times \Integers$. Hence
  postcomposing the
  classifying map of its universal covering with the $\ellone$-norm preserving
  projection
  $
    \HomotopyGroupOfObject{1}{\ManifoldFiber}{\Point} \times \Integers
    \to
    \HomotopyGroupOfObject{1}{\ManifoldFiber}{\Point}
  $
  shows that $\MappingTorus{\ClutchingFunction{\ManifoldTotal}}$ has the same
  simplicial volume as $\ManifoldTotal$. Since
  $\ClutchingFunction{\ManifoldTotal}$ is the identity on the sphere component,
  the mapping torus fibers over $\Sphere{n-1}$. By repeating this process we
  get a bundle over $\Sphere{2}$, with the same simplicial volume as
  $\ManifoldTotal$.

  So let us now assume that we have a bundle
  $
    \ManifoldFiber
    \to
    \ManifoldTotal
    \to
    \Sphere{2}
  $%
  . In this case we have a clutching function
  $
    \Sphere{1}
    \to
    \DiffZero{\ManifoldFiber}
  $
  and a representative of the fundamental class of $\ManifoldTotal$ is given by
  a representative $\sigma$ of the fundamental class of $\Ball{2} \times
  \ManifoldFiber$ on the uper hemisphere and a chain that bounds
  $
    \apply
      {\ClutchingFunction{\ManifoldTotal}}
      {\partial \sigma}
  $
  on the lower hemisphere $\Ball{2}\times \ManifoldFiber$. We will construct
  representatives of $\ManifoldTotal$ whose $\ellone$-norm
  is arbitrarily small.

  Fix some metric on $\ManifoldFiber$ and let us denote the maximal Lipschitz
  constant of
  $
    \apply{\ClutchingFunction{\ManifoldTotal}}{-}
  $
  by $L$.
  By Lemma~\ref{lem:Yano} there exists some $\delta$ and a constant $C$
  such that any nullhomologous cycle $\sigma$ in $\Ball{2} \times
  \ManifoldFiber$ with diameter less than $\delta$ bounds a chain whose
  $\ellone$-norm is bounded by $C \SimplicialVolume{\sigma}$.
  Now pick some arbitrary representative $\sigma_{\ManifoldFiber}$ of the
  fundamental class of $\ManifoldFiber$ such that the diameter of all simplices
  is bounded by $\frac{\delta}{100*L}$, and some representative
  $\sigma_{\Sphere{1}}$ of the fundamental class of $\Sphere{1}$.
  Let
  $
    f^d
    \colon
    \Sphere{1} \times \ManifoldFiber
    \to
    \Sphere{1} \times \ManifoldFiber
  $
  denote the degree $d$-covering map given by $\apply{f^d}{(z,x)} = (z^d, x)$
  and let
  $
    \tau_d
    \colon
    C_*(\Sphere{1}\times \ManifoldFiber, \Reals)
    \to
    C_*(\Sphere{1}\times \ManifoldFiber,\Reals)
  $
  denote the corresponding transfer map, normalized such that
  $
    \HomologyOfSpaceMorphism{f_d}
    \circ
    \HomologyOfSpaceMorphism{\tau_d}
  $
  is the identity on homology.
  The transfer map preserves the $\ellone$ norm of a class while decreasing the
  diameter in $\Sphere{1}$-direction of all simplices. Because the topology of
  $\DiffZero{\ManifoldFiber}$ is generated by the metric of uniform
  convergence, one can choose a big enough $d$ such that
  $
    \apply
      {\HomologyOfSpaceMorphism{\tau_d}}
      {
        \apply
          {\HomologyOfSpaceMorphism{\ClutchingFunction{\ManifoldTotal}}}
          {\sigma_{\Sphere{1}} \times \sigma_{\ManifoldFiber}}
      }
  $
  has diameter less than $\delta$, hence it is bounded by a cycle $\rho$ such
  that
  \[
    \SimplicialVolume{\rho}
    \leq
    C \SimplicialVolume{
      \apply
        {\HomologyOfSpaceMorphism{\tau_d}}
        {
          \apply
            {\HomologyOfSpaceMorphism{\ClutchingFunction{\ManifoldTotal}}}
            {\sigma \times \sigma_{\ManifoldFiber}}
        }
    }
    =
    C \binom{m+1}{1}
    \SimplicialVolume{\sigma_{\Sphere{1}}}
    \SimplicialVolume{\sigma_{\ManifoldFiber}},
  \]%
  where $m$ denotes the dimension of the fiber.
  Now $\apply{\HomologyOfSpaceMorphism{f^d}}{\rho}$ bounds
  $
    \apply
      {\HomologyOfSpaceMorphism{\ClutchingFunction{\ManifoldTotal}}}
      {\sigma_{\Sphere{1}} \times \sigma_{\ManifoldFiber}}
  $
  and has norm bounded by
  $
    C \binom{m+1}{1}
    \SimplicialVolume{\sigma_{\Sphere{1}}}
    \SimplicialVolume{\sigma_{\ManifoldFiber}}
  $
  . By taking the cone of $\sigma_{\Sphere{1}}$ we
  end up with a representative of the fundamental class of $\Ball{2}$ denoted
  by $\sigma_{\Ball{2}}$.
  Hence
  \[
    \sigma_{\Ball{2}} \times \sigma_{\ManifoldFiber}
    +
    \apply{\HomologyOfSpaceMorphism{f^d}}{\rho}
  \]
  (Here the sum means that we take the first summand on the upper hemisphere
  and the second summand on the lower hemisphere)
  is a representative of the fundamental class of $E$ whose norm is bounded by
  \[
    C' \SimplicialVolume{\sigma_{\Sphere{1}}}
    \SimplicialVolume{\sigma_{\ManifoldFiber}}
  \]%
  Since the norm of $\sigma_{\Sphere{1}}$ can be chosen to be arbitrarily
  small this concludes the proof.
\end{proof}

\begin{rem}
\label{rem:boundedclassbackwardsimplication}
For hyperbolic groups, all cohomology classes are bounded \cite{Mineyev} and hence any rationally essential manifold has positive simplicial volume. Thus, in case $\Group$ is hyperbolic, Theorem \ref{thm:VanishingSimplicialVolume} implies that Theorem \ref{thm:NonEssentiality} also holds for base $S^2$.
\end{rem}
\section{Positive scalar curvature}\label{Sec:PSC}
In this section we prove Theorem~\ref{thm:PSC} by showing that the manifold
$\SeparatingManifold$ in the Theorem supports a psc-metric, whereas a certain
mapping torus of $\SeparatingManifold$ does not. We will prove the latter by
constructing a map of
non-zero degree from this mapping torus to the $7$-torus, following the line of
thought of the proof of Theorem \ref{thm:TwistedDouble}, and employing the
well-known machinery due to Schoen--Yau \cite{SchoenYau1, SchoenYau2}. The idea
how to conclude the desired statement appeared in recent work by Frenck
\cite{Frenck}.

\begin{proof}
We regard $\Torus{7} = (\Sphere{1})^7$ as a subset of $\ComplexNumbers^{7}$. The function
$
  \ContinuousMap\colon \Torus{7} \to \Reals,
  \left(z_1, \ldots, z_7\right)
  \mapsto
  \sum_i \RealPart{z_i}
  $
is a minimal Morse function. Its critical points have the coordinates $(\pm 1, \ldots, \pm 1)$, and the index of each of these is the number of $+1$'s. In particular, the critical values of $\ContinuousMap$ are the odd numbers between $-7$ and $7$.

Let us denote $\apply{\ContinuousMap^{-1}}{[-7, 0]}$ by $\NullBordism$
and $\apply{\ContinuousMap^{-1}}{[0,7]}$ by $\NullBordism'$.
Evidently $\NullBordism$ contains all critical points (and hence all handles)
of index $\leq 3$ and $\NullBordism'$ contains all critical points (and
all handles) of index $\geq 4$. Since $0$ is a regular value of $\ContinuousMap$, we conclude that $\SeparatingManifold = \apply{\ContinuousMap^{-1}}{\{0\}}$ is a smooth
submanifold with trivial normal bundle separating $\NullBordism$ and
$\NullBordism'$.

Let us investigate the geometric properties of $\SeparatingManifold$:
The manifolds $\SeparatingManifold$ and $\NullBordism$ inherit spin
structures from $\Torus{7}$, and hence $\SeparatingManifold$ is spin
null-bordant in $\Torus{7}$. Since
$
  \Torus{7}
  =
  \ClassifyingSpace{\Integers^7}
  =
  \ClassifyingSpace{\HomotopyGroupOfObject{1}{\SeparatingManifold}{\Point}}
$
this implies that $\SeparatingManifold$ carries a psc-metric by
\cite{GromovLawson}. (The theorem we need is formulated as Theorem 2.1 in
\cite{DwyerSchickStolz}.)
Since $\Sphere{2}\times \ComplexNumbers P^2$
is not spin, it follows that the manifold
$
  \Manifold
  =
  \SeparatingManifold
  \#
  (\Sphere{2} \times \ComplexNumbers P^2)
$
does not carry a spin structure as well. Nevertheless
$
  \Sphere{2}\times \ComplexNumbers P^2
$
carries a psc-metric; by Theorem~A in \cite{GromovLawson} the connected sum
carries a psc-metric as well.

In order to show that $\SpacePSC{\Manifold}$
has infinitely many components, we carry out the constructions in
the proof of Theorem~\ref{thm:TwistedDouble} by hand.

We need to construct a particular diffeomorphism between $\NullBordism$ and
$\NullBordism'$. In order to do that
let us denote
$
  \SeparatingManifold
  \cap
  \{
    (z_1,\ldots,z_7)\in \Torus{7}
    \mid
    \sum_i \ImaginaryPart{z_i} \leq 0
  \}
$
by $V$.
This is a manifold with boundary, denoted by $V_0$, consisting of the points
such that $\sum_i z_i = 0$.

Let
$
  \Psi \colon V \times \Sphere{1} \to \Torus{7}
$
denote the restriction of the diagonal action to $V$. Note that this map is
surjective, but it fails to be injective. Namely two points $(x,t)$ and
$(x',t')$ map to the same point in $\Torus{7}$ if and only if $x, x'\in V_0$
and $t'\cdot x' = t \cdot x$.
Let us denote by $\sim$ the equivalence relation generated by $(x,t) \sim
(t x, 1)$ for all $x \in V_0$. Then $\Psi$ descends to a homeomorphism
$\hat{\Psi}\colon V\times \Sphere{1}/ \hspace{-3pt} \sim \to \Torus{7}$ and by
construction $\NullBordism \cong V \times \Sphere{1}_{-}/ \hspace{-3pt} \sim$
and
$\NullBordism' \cong V \times \Sphere{1}_{+}/\hspace{-3pt} \sim$, where
$\Sphere{1}_{\pm}$ denotes the points in $\Sphere{1}$ with non-negative and
resp. non-positive imaginary part.

Fix a tubular neighborhood $V_0 \times [0,1] \subset V$ and let us define a
map
\begin{align*}
  \bar{\Diffeomorphism}
  \colon
  V \times \Sphere{1}_{-}/ \hspace{-3pt} \sim
  &
  \to
  V \times \Sphere{1}_{+}/ \hspace{-3pt} \sim
  \\
  [x,t]
  &
  \mapsto
  \begin{cases}
    [x,\overline{t}]
    &
    x \in (V_0 \times [0,1])^c
    \\
    [e^{2 \arg(t)(1-s)}x, s, \overline{t}]
    &
    [x,s,t] \in V_0 \times [0,1] \times \Sphere{1}_{-}/ \hspace{-3pt}\sim
  \end{cases}
\end{align*}
It is easy to verify that this yields a well-defined diffeomorphism.
In conclusion, this yields that the torus is diffeomorphic to a twisted double of
$V \times \Sphere{1}_{-}/ \hspace{-3pt}\sim$ and the gluing is given by the
restriction of $\bar{\Diffeomorphism}$ to $\SeparatingManifold \cong V\times \{
-1,
1\} / \hspace{-3pt} \sim$; we denote this restriction by $\Diffeomorphism$.
Since $\Diffeomorphism$ is the identity on $(V_0
\times [0,1])^c$, $\Diffeomorphism$ induces the identity on fundamental groups.

We will construct a particular diffeomorphism $\Diffeomorphism$ of $\SeparatingManifold$ that is the identity on some open set and such that the mapping
torus $\MappingTorus{\Diffeomorphism}$ has a map $\ContinuousMapALT$ of non-zero degree to $\Torus{7}$.

By the bordism construction in the proof of
Theorem~\ref{bigthm:A}, we conclude that the mapping
torus $\MappingTorus{\Diffeomorphism}$ has a map $\ContinuousMapALT$ to $B\Integers^7$
representing a non-zero homology class: in
other words, $\ContinuousMapALT$ is a map of non-zero degree to $\Torus{7}$. The restriction of $\Diffeomorphism$ to $\SeparatingManifold$ is the identity on an open subset, hence $\Diffeomorphism$ can be extended to a diffeomorphism $\hat{\Diffeomorphism}$ of $\Manifold \coloneqq \SeparatingManifold \# (\ComplexNumbers P^2 \times \Sphere{2})$.
Since $\Diffeomorphism$ is the identity on a neighborhood of the gluing of the
connected sum, there exists a map
$
  \ContinuousMapALT'
  \colon
  \MappingTorus{\hat{\Diffeomorphism}}
  \to
  \MappingTorus{\Diffeomorphism}
$ of degree $1$, and hence a map $\ContinuousMapALT \circ \ContinuousMapALT' \colon
\MappingTorus{\hat{\Diffeomorphism}}\to \Torus{7}$ of non-zero degree.
Using the Schoen--Yau minimal surface method (see p. 41f in
\cite{GromovLectures}), we conclude that
$\MappingTorus{\hat{\Diffeomorphism}}$ supports no psc-metric. Furthermore,
by the same argument no finite covering of
$\MappingTorus{\hat{\Diffeomorphism}}$ carries a psc-metric, which implies that
the subgroup of $\Diff{\Manifold}$ generated by $\hat{\Diffeomorphism}$ acts
freely on the path-components of $\SpacePSC{\Manifold}$: indeed, if for some $k
\geq 1$ the pull-back of the metric on $\Manifold$ along
$\hat{\Diffeomorphism}^k$ was isotopic to the original metric, then it would
follow that the $k$-sheeted covering $\MappingTorus{\hat{\Diffeomorphism}^k}$
of $\MappingTorus{\hat{\Diffeomorphism}}$ admits a psc-metric, which we ruled
out above. We thus deduce that
$
  \apply{\pi_0}{\SpacePSC{\Manifold}}
$
is infinite. The proof for $\SeparatingManifold$ instead of $\Manifold$ is completely analogous.
\end{proof}

Note that the same line of argument would have worked with any other
$\SeparatingManifold \subset \Torus{7}$ that is compatible with the
constructions of Theorem~\ref{thm:TwistedDouble}.

\section{Macroscopic dimension}\label{Sec:Essentiality}
We have seen above that rational group homology classes in odd
degrees at least $7$ can be represented by mapping tori. In this section, we
combine this insight with a result by Dranishnikov to construct examples of two closed manifolds with isomorphic fundamental groups whose universal coverings are diffeomorphic, but have different macroscopic dimensions. We begin by recalling this important concept.

\begin{dfn}[Uniformly cobounded]
A continuous map $f \colon X \to Y$ between metric spaces is called uniformly cobounded if there exists $s > 0$ such that the diameter of any pre-image $f^{-1}(y), y \in Y$, is bounded by $s$ from above.
\end{dfn}

\begin{dfn}[Macroscopic dimension]
\label{definition:macroscopic_dimension}
For a Riemannian manifold $(M,g)$, the macroscopic dimension of the universal cover, denoted $\MacroscopicDimension{\widetilde{\Manifold},g}$, is the smallest $k \geq 0$ such that there exists a uniformly cobounded Lipschitz map from $\widetilde{\Manifold}$ to a $k$-dimensional simplicial complex.
\end{dfn}

\begin{rem} (a) In fact,  $\MacroscopicDimension{\widetilde{\Manifold},g}$ is independent of the Riemannian metric $g$ on $\Manifold$, so we will simply write $\MacroscopicDimension{\widetilde{\Manifold}}$ henceforth. 

(b) One could also try to define the macroscopic dimension of any metric space $X$, but to get a working theory it then seems vital to assume that $X$ is a uniform simplicial complex, and require that the map $f$ from \ref{definition:macroscopic_dimension} is simplicial after iterated barycentric subdivisions, cf.~\cite[Def.~1.1]{Dranishnikov1}. One way to ensure this holds is restricting to smooth manifolds and requiring the maps to be Lipschitz. For elaborations on several related but somewhat inequivalent notions of macroscopic dimension, see \cite{Dranishnikov-clarification}.
\end{rem}

\begin{ex} If $\Manifold$ is the $n$-torus $(S^1)^n$, $\MacroscopicDimension{\widetilde{\Manifold}}$ is equal to $n$.
\end{ex}

\begin{prop}
\label{pro:inessential}
An inessential $\Dimension$-manifold $\Manifold$ satisfies $\mc{\Manifold} < \Dimension$.
\end{prop}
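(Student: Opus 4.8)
The plan is to unwind the definition of macroscopic dimension by factoring the classifying map of the universal covering through a lower-dimensional complex. Since $\Manifold$ is inessential, the classifying map $c \colon \Manifold \to \ClassifyingSpace{\pi}$ of its universal covering (where $\pi = \HomotopyGroupOfObject{1}{\Manifold}{\Point}$) sends the fundamental class to $0 \in \HomologyOfSpaceObject{\Dimension}{\ClassifyingSpace{\pi}}{\Integers}$. A standard consequence — obtained by cellular approximation together with the vanishing of $c_*\FundamentalClass{\Manifold}$, pushing the fundamental class off the top cells — is that $c$ is homotopic to a map landing in the $(\Dimension-1)$-skeleton $\ClassifyingSpace{\pi}^{(\Dimension-1)}$; I would recall this as the usual characterization of (in)essentiality via skeleta. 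Lifting this homotopy to universal covers, the map $\widetilde{c} \colon \UniversalCovering{\Manifold} \to \UniversalCovering{\ClassifyingSpace{\pi}} = \ClassifyingSpace{\pi}\widetilde{\phantom{x}}$ is $\pi$-equivariantly homotopic to a map into the preimage of the $(\Dimension-1)$-skeleton, which is a $(\Dimension-1)$-dimensional simplicial complex $Y$.

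Next I would promote this to a map of metric spaces with the required properties. Equip $\Manifold$ with a Riemannian metric and $\UniversalCovering{\Manifold}$ with the pulled-back metric; equip $\ClassifyingSpace{\pi}$ with a model that is a simplicial complex on which $\pi$ acts freely, simplicially, and cocompactly through the $\Dimension$-skeleton (possible since $\pi$ is finitely presented when $\Manifold$ is a closed manifold, and in any case one can work skeleton-by-skeleton), and metrize $Y \subset \ClassifyingSpace{\pi}\widetilde{\phantom{x}}$ as a uniform simplicial complex. The key point is that $\widetilde{c}$ descends from a map of the compact quotient $\Manifold$, so it is automatically Lipschitz after homotoping it to be simplicial on a triangulation of $\Manifold$, and it is uniformly cobounded because the preimages of points, being controlled by the compact fundamental domain translated by $\pi$, have uniformly bounded diameter. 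Then $\widetilde{c} \colon \UniversalCovering{\Manifold} \to Y$ is a uniformly cobounded Lipschitz map to a $(\Dimension-1)$-dimensional simplicial complex, so $\mc{\Manifold} \le \Dimension - 1 < \Dimension$.

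I expect the main obstacle to be the second step: verifying uniform coboundedness of $\widetilde{c}$ cleanly. One must argue that the fibers $\widetilde{c}^{-1}(y)$ have diameter bounded independently of $y$, and the honest way to do this is to observe that $\widetilde{c}$ is $\pi$-equivariant and $\pi$ acts cocompactly on both source and target, so a single fundamental-domain estimate suffices; some care is needed because $\widetilde{c}$ need not be proper as a map of topological spaces, but coboundedness is about diameters of fibers, not compactness, and the equivariance plus cocompactness gives exactly that. A secondary point is ensuring the target is a genuine \emph{uniform} simplicial complex in the sense required by Definition~\ref{definition:macroscopic_dimension} and the cited references of Dranishnikov; choosing the standard cocompact model for $\ClassifyingSpace{\pi}$ through dimension $\Dimension$ and taking its universal cover handles this. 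Everything else is the routine dictionary between the homological definition of essentiality and its skeletal reformulation, together with the fact that Lipschitz maps between compact complexes lift to Lipschitz maps of universal covers.
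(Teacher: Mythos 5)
Your proposal follows the same route as the paper: deform the classifying map $\Manifold \to B\pi$ into the $(\Dimension-1)$-skeleton, lift to universal covers, and observe that equivariance plus cocompactness makes the lift Lipschitz and uniformly cobounded. The paper treats the first step (inessentiality implies the classifying map compresses into the $(\Dimension-1)$-skeleton) as a citation to Bolotov--Dranishnikov rather than via your ``push the map off the top cells'' sketch, which is the right idea but does require an obstruction-theoretic argument with local coefficients to make honest; otherwise your argument is identical in substance to the one in the paper.
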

\begin{proof} Let $\pi_1(\Manifold)$ be denoted by $\Group$. Obstruction theory
allows us to deform the map $\Manifold \to B\Group$ classifying the universal
covering of $\Manifold$ along the inclusion of the $(\Dimension-1)$-skeleton
$B\Group^{(\Dimension -1)}$, see \cite[Prop.~3.2]{BolotovDranishnikov} for more details.

Then for any choice of a Riemannian metric on
$\Manifold$, the lift of this map to universal coverings,
$\widetilde{\Manifold} \to E\Group^{(\Dimension -1)}$, is uniformly cobounded,
hence $\mc{\Manifold} \leq \Dimension -1$.
\end{proof}

\begin{rem} Note that the conclusion of the previous proposition fails under the weaker assumption that $\Manifold$ is just rationally inessential \cite{Markinkowski}.

The converse of this statement, i.e., the assertion that any rationally essential manifold has maximal macroscopic dimension, was conjectured by Gromov \cite{Gromov_conjecture} and proven by Dranishnikov for amenable fundamental groups \cite{Dranishnikov1}, but shortly thereafter disproven in general \cite{Dranishnikov_counterexample}.
\end{rem}

We are now in a position to state and prove the main Theorem of this section.

 \begin{thm}[Twin manifolds with different macroscopic dimensions]
\label{thm:MacroscopicDimensionAmenable} In every dimension $2n+1 \geq 7$ and for any finitely presented amenable group $\Group$ such that $\HomologyOfSpaceObject{2n+1}{\Group}{\Rationals} \neq 0$,  there exist two closed $(2n+1)$-manifolds $\Manifold_1$ and $\Manifold_2$ such that (for any choice of Riemannian metrics):
\begin{enumerate}[(i)]
\item The fundamental groups $\pi_1(\Manifold_1)$ and $\pi_1(\Manifold_2)$ are isomorphic to $\Group \times \Integers$.
\item The universal coverings of $\widetilde{\Manifold_1}$ of $\Manifold_1$ and $\widetilde{\Manifold_2}$ of $\Manifold_2$ are diffeomorphic and quasi-isometric. 
\item $\mc{\Manifold_1} < 2n+1 = \mc{\Manifold_2}$.
\end{enumerate}
\end{thm}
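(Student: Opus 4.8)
The plan is to produce $\Manifold_2$ as a mapping torus supplied by Theorem~\ref{bigthm:A} and to let $\Manifold_1$ be the product of the corresponding fibre with a circle; the two macroscopic dimensions are then read off from Dranishnikov's theorem \cite{Dranishnikov1} and from Proposition~\ref{pro:inessential}, respectively. Concretely, fix a nonzero class $\HomologyClass\in\HomologyOfSpaceObject{2n+1}{\Group}{\Rationals}$ and apply Theorem~\ref{bigthm:A}; unravelling the construction in its proof (which runs Theorem~\ref{thm:TwistedDouble} on a manifold with fundamental group $\Group$) produces a closed oriented $2n$-manifold $\SeparatingManifold$ with $\pi_1(\SeparatingManifold)\cong\Group$, an oriented nullbordism $\NullBordism$ of $\SeparatingManifold$ for which $\SeparatingManifold\hookrightarrow\NullBordism$ is a $\pi_1$-isomorphism, and a pointed orientation-preserving diffeomorphism $\Diffeomorphism$ of $\SeparatingManifold$ acting trivially on $\pi_1$, such that $\pi_1(\MappingTorus{\Diffeomorphism})\cong\Group\times\Integers$ and $\MappingTorus{\Diffeomorphism}$ carries a map $\ContinuousMap\colon\MappingTorus{\Diffeomorphism}\to\ClassifyingSpace{\Group}$, inducing the projection $\Group\times\Integers\to\Group$ on $\pi_1$, with $\apply{\HomologyOfSpaceMorphism{\ContinuousMap}}{\FundamentalClass{\MappingTorus{\Diffeomorphism}}}=\Multiple\HomologyClass\neq 0$ for some $\Multiple\in\Integers\setminus\{0\}$. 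We set $\Manifold_2\coloneqq\MappingTorus{\Diffeomorphism}$ and $\Manifold_1\coloneqq\SeparatingManifold\times\Sphere{1}$.

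Assertion (i) is then immediate. For (ii), note that since $\Diffeomorphism$ acts trivially on $\pi_1$ it lifts to a diffeomorphism $\UniversalCovering{\Diffeomorphism}$ of $\UniversalCovering{\SeparatingManifold}$ commuting with the deck action, so the universal cover of $\MappingTorus{\Diffeomorphism}$ is diffeomorphic to $\UniversalCovering{\SeparatingManifold}\times\Reals$; the universal cover of $\Manifold_1=\SeparatingManifold\times\Sphere{1}$ is the same manifold. Hence $\UniversalCovering{\Manifold_1}$ and $\UniversalCovering{\Manifold_2}$ are diffeomorphic, and — fixing any Riemannian metrics — the \v{S}varc--Milnor lemma identifies each of them, up to quasi-isometry, with $\Group\times\Integers$ equipped with a word metric, so they are quasi-isometric.

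It remains to establish (iii). \emph{Upper bound.} Since $\SeparatingManifold\hookrightarrow\NullBordism$ is a $\pi_1$-isomorphism and $\ClassifyingSpace{\Group}$ is aspherical, the classifying map $\SeparatingManifold\to\ClassifyingSpace{\Group}$ of the universal cover extends over $\NullBordism$; as the fundamental class of the boundary of a compact oriented manifold vanishes in the homology of that manifold, the image of $\FundamentalClass{\SeparatingManifold}$ in $\HomologyOfSpaceObject{2n}{\ClassifyingSpace{\Group}}{\Integers}$ is therefore $0$. Consequently the classifying map of the universal cover of $\Manifold_1=\SeparatingManifold\times\Sphere{1}$, which factors through $\ClassifyingSpace{\Group}\times\Sphere{1}\simeq\ClassifyingSpace{(\Group\times\Integers)}$, sends $\FundamentalClass{\Manifold_1}=\FundamentalClass{\SeparatingManifold}\times\FundamentalClass{\Sphere{1}}$ to $0$, so $\Manifold_1$ is inessential, and Proposition~\ref{pro:inessential} gives $\mc{\Manifold_1}<2n+1$. \emph{Lower bound.} Because maps into the aspherical space $\ClassifyingSpace{\Group}$ are determined up to homotopy by the induced homomorphism on $\pi_1$, the map $\ContinuousMap$ is homotopic to the composite $\MappingTorus{\Diffeomorphism}\to\ClassifyingSpace{(\Group\times\Integers)}\to\ClassifyingSpace{\Group}$ of the classifying map of the universal cover with the projection; hence the image of $\FundamentalClass{\Manifold_2}$ in $\HomologyOfSpaceObject{2n+1}{\ClassifyingSpace{(\Group\times\Integers)}}{\Rationals}$ maps to $\Multiple\HomologyClass\neq 0$ and is itself nonzero, i.e.\ $\Manifold_2$ is rationally essential. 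As $\Group$ and $\Integers$ are amenable, so is $\Group\times\Integers$, and Dranishnikov's theorem \cite{Dranishnikov1} — that a rationally essential manifold with amenable fundamental group attains the maximal macroscopic dimension — yields $\mc{\Manifold_2}=2n+1$. Combining the two bounds gives (iii).

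The only genuinely non-formal point, and hence the main obstacle, is to extract from the proof of Theorem~\ref{bigthm:A} precisely the additional data used above: the nullbordism $\NullBordism$ with $\pi_1$-isomorphic inclusion — which is what makes $\Manifold_1$ \emph{integrally} (not merely rationally) inessential, as is needed for Proposition~\ref{pro:inessential} in view of the fact, recorded in the Remark following it, that rational inessentiality would not suffice — together with the factorization of $\ContinuousMap$ through the classifying map of the universal cover. Everything else is bookkeeping, including the identification $\UniversalCovering{\MappingTorus{\Diffeomorphism}}\cong\UniversalCovering{\SeparatingManifold}\times\Reals$ and the closure of amenability under products.
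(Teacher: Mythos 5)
Your proof is correct and follows essentially the same route as the paper: take $\Manifold_2 \coloneqq \MappingTorus{\Diffeomorphism}$ and $\Manifold_1 \coloneqq \SeparatingManifold \times \Sphere{1}$ from the construction in Theorem~\ref{bigthm:A}, read off (i) and (ii), get the upper bound for $\mc{\Manifold_1}$ from the (integral) nullbordism of $\SeparatingManifold \to B\Group$ together with Proposition~\ref{pro:inessential}, and get the lower bound from rational essentiality of $\Manifold_2$ plus Dranishnikov's theorem for amenable groups. The only cosmetic difference is that you apply Proposition~\ref{pro:inessential} directly to $\Manifold_1$ after showing it inessential, whereas the paper applies it to $\SeparatingManifold$ to get $\mc{\SeparatingManifold} < 2n$ and then crosses the cobounded map with the identity on $\Reals$; both versions are fine.
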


\begin{proof} We start by applying Theorem \ref{bigthm:A} to $\Group$: let $\Manifold_1 \coloneqq \SeparatingManifold \times S^1, \Manifold_2 \coloneqq T_{\Diffeomorphism}$.
Then (i) is clear, (ii) holds as both universal coverings are diffeomorphic and quasi-isometric to $\widetilde{\SeparatingManifold} \times \Reals$.

From the construction in the proof of Theorem \ref{bigthm:A} it is clear that
the map $ \SeparatingManifold \to B\Group$ classifying the universal covering
of $ \SeparatingManifold$ is nullbordant, hence $\SeparatingManifold$ is
inessential. By Proposition \ref{pro:inessential}, this implies that
$\mc{\SeparatingManifold} < 2n$ and hence $\mc{\Manifold_1} < 2n+1$, as we can
cross any cobounded map $\widetilde{\SeparatingManifold} \to K^{(2n-1)}$ with
the identity on $\Reals$. The mapping torus $\Manifold_2  =
T_{\Diffeomorphism}$ however is rationally essential by construction. Recall
Gromov's conjecture stating that the universal cover of any rationally
essential manifold is of maximal macroscopic dimension which was verified for
amenable fundamental groups by Dranishnikov \cite{Dranishnikov1}. Applied to
the situation at hand we deduce that $\Manifold_2$ satisfies $\mc{
\Manifold_2}  = 2n+1$.
\end{proof}


\begin{rem}
A non-abelian example of $\Group$ as in Theorem \ref{thm:MacroscopicDimensionAmenable} is the integral Heisenberg group of rank $4n+1$, which is finitely presented and amenable. Its cohomology, calculated in \cite{HeisenbergGroup_Cohomology}, satisfies  $\HomologyOfSpaceObject{2n+1}{G}{\Rationals} \neq 0$ and $\HomologyOfSpaceObject{j}{\Group}{\Rationals} = 0$ for all $0 < j \leq 2n$.
\end{rem}
\section{Characteristic classes}\label{Sec:CharacteristicClasses}
In this final section we prove Theorem \ref{thm:CharClassesHurewicz}. We begin
by recalling that for an oriented smooth manifold bundle $\ManifoldFiber \to
\ManifoldTotal \to \ManifoldBase$, the \emph{vertical tangent bundle} vector
bundle $T_{\pi}$ over the total space $\ManifoldTotal$ whose dimension
$\Dimension$ agrees with the dimension of the fiber. For any characteristic
class of oriented vector bundles, $c \in
\CohomologyOfSpaceObject{\ast}{BSO(\Dimension)}{\Rationals}$, the class obtained
from fiber integration \[\kappa_{c} \coloneqq \int_{\pi} c(T_{\pi}) \in
\CohomologyOfSpaceObject{\ast-d}{\ManifoldBase}{\Rationals}\] is known as the
tautological class or generalized Milller--Morita--Mumford or simply
$\kappa$-class of $c$. These classes are characteristic classes of smooth fiber
bundles, so they
naturally live in the cohomology of $B\Diff{\ManifoldFiber}$.

Galatius--Randal-Williams generalized the concept of $\kappa$-classes to allow
for tangential structures and proved a higher-dimensional analogue of the
Madsen--Weiss theorem \cite{GRW}. Their theorems show that
the cohomology rings of diffeomorphism groups are, up to stabilization, generated by certain
cohomology classes defined on the level of MT-spectra; the classes $\kappa_{\alpha}$ can be understood in this context.

\begin{proof} For (1), we invoke the work by Galatius--Randal-Williams. We first construct a manifold with boundary that is weakly equivalent to  $B\Group^{(\HalfDimension)}$ and has trivial tangent bundle. From the finiteness assumption, we may assume that $B\Group^{(\HalfDimension)}$ is a finite CW-complex, hence it admits a local embedding into $\Reals^{2\HalfDimension}$. More precisely, we may assume that this local embedding has only finitely many self-intersections, all contained in top cells. We then consider a `tubular' open neighborhood of the image of $B\Group^{(\HalfDimension - 1)}$, and glue on a tubular neighborhood $C \times D^{\HalfDimension}$ for each ${\HalfDimension}$-cell $C$ of $B\Group^{(\HalfDimension)}$. This yields a manifold $\NullBordism$ with non-empty boundary  as desired. The $\HalfDimension$-stage Moore--Postnikov filtration of the classifying map of its tangent bundle relative to $\partial \NullBordism$ is $\NullBordism \to EO(2\HalfDimension) \times B\Group \xrightarrow{\theta} BO(2\HalfDimension)$. The relevant Thom spectrum for this tangental structure is $\textbf{MT}\theta = \sum^{\infty - 2\HalfDimension}_+ B\Group$. From the most general result in \cite{GRW} allowing for non-trivial fundamental groups (Theorem 1.8), we deduce the existence of some cobordism $\ManifoldAuxiliary$ starting in $\partial \NullBordism$ such that the class $\kappa_{\alpha}$ lies in the image of the map induced on cohomology from the classifying map $B\text{Diff}_{\partial}(\NullBordism \cup K) \to \Omega^{\infty} \textbf{MT}\theta = \Omega^{\infty} \sum^{\infty - 2\HalfDimension}_+ B\Group$ and is non-zero. (The cobordism $K$ is not constructed explicitly, but has to exist as the homotopy colimit from \cite[Theorem 1.8]{GRW} behaves in the way stated.) So we have shown that $0 \neq \kappa_{\alpha} \in \CohomologyOfSpaceObject{j}{ B\text{Diff}_{\partial}(\NullBordism \cup K)}{\Rationals}$.

To conclude, let $\Manifold$ be the double of $\NullBordism \cup K$. Then there is a map $B\text{Diff}_{\partial}(\NullBordism \cup K) \to B\DiffGroup{\Manifold}$ so that the class $\kappa_{\alpha}$ as in Theorem \ref{thm:CharClassesHurewicz} on the cohomology of the second space pulls back to the non-zero class on the left considered before, which finishes the proof.

Assertion (2) can be deduced from Theorem \ref{thm:NonEssentiality}: we need to prove that for any $\ManifoldFiber$-bundle $\pi \colon \ManifoldTotal \to \Sphere{\Dimension}$ with $\Dimension \ge 3$ and $\alpha \in \CohomologyOfSpaceObject{\Dimension + j}{B\Gamma}{\Rationals}$, we have $\kappa_{\alpha}(\pi) = 0$. Since
\[ \langle \kappa_{\alpha}(\pi) , [\Sphere{\Dimension}]\rangle = \langle \alpha, [E]\rangle = 0
\]
as $\ManifoldTotal$ is rationally inessential, this is indeed the case.
\end{proof}

\begin{rem}
Note that by applying Theorem \ref{thm:VanishingSimplicialVolume}, we see that the second part of Theorem \ref{thm:CharClassesHurewicz} also holds for bounded classes $\alpha \in \CohomologyOfSpaceObject{\Dimension + 2}{B\Gamma}{\Rationals}$. In particular, it holds for all homology classes in degree $d+2$ if $\Gamma$ is hyperbolic (compare Remark \ref{rem:boundedclassbackwardsimplication}).
\end{rem}

\bigskip
\printbibliography

\bigskip
{\footnotesize
\noindent
Thorben Kastenholz was supported by the DFG (German Research Foundation) -- SPP 2026, Geometry at Infinity - Project 73: Geometric Chern characters in $p$-adic equivariant $K$-theory.
\vspace{11pt}

\noindent
Jens Reinhold was supported by the DFG (German Research Foundation) -- SFB 1442 427320536, Geometry: Deformations and Rigidity, as well as under Germany's Excellence Strategy EXC 2044 390685587, Mathematics M\"unster: Dynamics-Geometry-Structure.
\newline
\noindent
\end{document}